\documentclass[11pt]{article}
\usepackage[a4paper,margin=1in]{geometry}
\usepackage[T1]{fontenc}
\usepackage[utf8]{inputenc}
\usepackage{lmodern}
\usepackage{microtype}
\usepackage{amsmath,amssymb,amsthm,mathtools}
\usepackage{enumitem,booktabs,array}
\usepackage{graphicx}
\usepackage{tikz}
\usetikzlibrary{positioning,calc,fit,arrows.meta,backgrounds,shadows.blur}
\usepackage{hyperref}
\usepackage{aliascnt}
\usepackage[nameinlink,noabbrev]{cleveref}
\usepackage{xcolor}
\definecolor{aeedge}{RGB}{70,0,70}
\definecolor{aefill}{RGB}{243,230,247}
\definecolor{fedge}{RGB}{0,0,166}
\definecolor{ffill}{RGB}{226,232,253}
\definecolor{aledge}{RGB}{0,97,72}
\definecolor{alfill}{RGB}{216,237,229}
\definecolor{lledge}{RGB}{200,69,0}
\definecolor{llfill}{RGB}{254,236,213}
\definecolor{binedge}{RGB}{144,95,19}
\definecolor{lincolor}{RGB}{189,121,7}

\hypersetup{
  colorlinks=true,
  linkcolor=blue!50!black,
  citecolor=green!40!black,
  urlcolor=blue!60!black,
  pdftitle={Folded-Algebraic Matroids: Characteristic Rigidity and Almost-Entropic Separation},
  pdfauthor={Shahram Khazaei},
  pdfsubject={Matroid representation and almost-entropic separation},
  pdfkeywords={algebraic matroid, multilinear matroid, characteristic set, group configuration, almost-entropic matroid}
}
\setlist{itemsep=2pt,topsep=4pt}
\newtheorem{theorem}{Theorem}[section]
\newaliascnt{lemma}{theorem}\newtheorem{lemma}[lemma]{Lemma}\aliascntresetthe{lemma}
\newaliascnt{proposition}{theorem}\newtheorem{proposition}[proposition]{Proposition}\aliascntresetthe{proposition}
\newaliascnt{corollary}{theorem}\newtheorem{corollary}[corollary]{Corollary}\aliascntresetthe{corollary}
\theoremstyle{definition}
\newaliascnt{definition}{theorem}\newtheorem{definition}[definition]{Definition}\aliascntresetthe{definition}
\newaliascnt{remark}{theorem}\aliascntresetthe{remark}

\crefname{theorem}{theorem}{theorems}\Crefname{theorem}{Theorem}{Theorems}
\crefname{lemma}{lemma}{lemmas}\Crefname{lemma}{Lemma}{Lemmas}
\crefname{proposition}{proposition}{propositions}\Crefname{proposition}{Proposition}{Propositions}
\crefname{corollary}{corollary}{corollaries}\Crefname{corollary}{Corollary}{Corollaries}
\crefname{definition}{definition}{definitions}\Crefname{definition}{Definition}{Definitions}
\crefname{remark}{remark}{remarks}\Crefname{remark}{Remark}{Remarks}

\newcommand{\acl}{\operatorname{acl}}
\newcommand{\trdeg}{\operatorname{trdeg}}
\newcommand{\cl}{\operatorname{cl}}
\newcommand{\FAlg}{\mathrm{FAlg}}
\newcommand{\FLin}{\mathrm{FLin}}
\newcommand{\Alg}{\mathrm{Alg}}
\newcommand{\Lin}{\mathrm{Lin}}
\newcommand{\AlmostEnt}{\mathrm{AE}}

\newcommand{\ind}{\mathrel{\perp\!\!\!\perp}}
\newcommand{\simF}{\mathrel{\sim_F}}

\title{\textbf{Folded-Algebraic Matroids: Characteristic Rigidity and Almost-Entropic Separation}}
\author{Shahram Khazaei\\
\small Department of Mathematical Sciences, Sharif University of Technology, Tehran, Iran\\
\small \href{mailto:shahram.khazaei@sharif.ir}{shahram.khazaei@sharif.ir}\\
\small ORCID: \href{https://orcid.org/0000-0002-2493-8840}{0000-0002-2493-8840}}
\date{}

\begin{document}
\maketitle

\begin{abstract}
We introduce \emph{folded-algebraic matroids}.  In such a representation, every matroid element is replaced by a finite tuple of algebraic quantities, and transcendence degree agrees with matroid rank after one uniform scaling.  The resulting class contains both algebraic and folded-linear matroids and is contained in the class of almost-entropic matroids, whose rank functions are limits of scaled entropy functions.  We prove that the latter containment is proper.  Our main result concerns the classical rank-three matroids $M(p)$ of Gordon.  For every prime $p$, we show that $M(p)$ has a folded-algebraic representation over a field $K$ if and only if $K$ has characteristic $p$.  We then use a point-identification construction that preserves almost-entropicity to obtain a $13$-element rank-three $3$-connected matroid $C_{2,3}$ that is almost entropic but not folded algebraic.  Choosing a common element as dealer also yields a connected $12$-participant port with incompatible characteristic requirements.  Finally, we record compact explicit witnesses and size bounds for several other separating regions among the representation classes.
\end{abstract}

\noindent\textbf{Keywords.} algebraic matroid; multilinear matroid; characteristic set; group configuration; almost-entropic matroid.

\noindent\textbf{2020 Mathematics Subject Classification.} 05B35 (primary); 03C45, 94A17 (secondary).

\section{Introduction}\label{sec:intro}

Representability is one of the central themes in matroid theory.  A matroid records an abstract pattern of dependence, and a representation realizes that pattern inside a more concrete mathematical structure.  In a linear representation, matroid elements are vectors over a field and matroid independence agrees with linear independence.  In an algebraic representation, elements are field quantities and rank is measured by transcendence degree.  A third exact model, usually called multilinear or folded linear, replaces each matroid element by a finite-dimensional vector space of one common dimension.

The algebraic and folded-linear classes are incomparable~\cite{BenEfraim2016,Matus1999}.  This makes a common enlargement natural.  Let $M$ be a matroid on ground set $E$, with rank function $r_M$, and fix a base field $K$.  Instead of assigning one algebraic quantity to each element $e\in E$, assign a finite algebraic \emph{packet} $X_e$ in a field extension of $K$.  For $A\subseteq E$, put $X_A=\bigcup_{e\in A}X_e$.  If every packet has transcendence degree $t$ over $K$ and
\[
\trdeg_K K(X_A)=t\,r_M(A)
\]
for every $A\subseteq E$, then the packet representation reproduces the matroid rank after a uniform scaling by $t$.  We call such representations \emph{folded algebraic}.  Ordinary algebraic representations are the case $t=1$, while folded-linear subspace arrangements yield folded-algebraic representations by evaluating their vectors as linear forms in independent transcendentals.

Entropy leads to a larger approximation class.  The joint entropies of a finite family of random variables form a monotone submodular rank function; limits of positive scalar multiples of such functions are called \emph{almost entropic}.  Mat\'u\v{s} proved that every algebraic matroid is almost entropic~\cite{Matus2024}.  Replacing each algebraic packet by scalar coordinates gives an ordinary algebraic matroid on a partitioned ground set, and grouping the coordinates back into packets gives
\[
\Alg\cup\FLin\subseteq\FAlg\subseteq\AlmostEnt.
\]
The landscape is summarized in \cref{fig:venn}.  We use $F_7$ for the Fano plane and $F_7^-$ for the non-Fano matroid obtained by relaxing one of its lines.  The symbols $Q^{\mathrm{NF}}_3(\mathsf Q_8)$ and $R_7$ denote the two components of Ben-Efraim's characteristic-$7$ construction, while $NnF(7)$ denotes the generalized non-Fano matroid of Sharififar--Sadeghi--Aboutorab~\cite{BenEfraim2016,SharififarEtAl2024}.  For matroids on disjoint ground sets, $M_1\oplus M_2$ denotes their direct sum, whose rank is the sum of the two component ranks.  The non-Pappus matroid shows that the overlap is strictly larger than the linear class:
\[
\Lin\subsetneq \Alg\cap\FLin.
\]
Indeed, the non-Pappus matroid is algebraic over finite fields~\cite{LindstromNonPappus1986}, has a $2$-fold linear representation over $\mathbb Q$~\cite[Example~4.2]{PendavinghVanZwam2013}, and is not linearly representable over any field.  The integral block matrix in the cited example also gives a $2$-fold linear representation over $\mathbb F_5$ after reduction modulo $5$.

\begin{figure}[!ht]
\centering
\begin{tikzpicture}[x=.01355cm,y=-.01355cm]
\useasboundingbox (0,0) rectangle (1169,835);

\path[fill=aefill,draw=aeedge,line width=.9pt] (565,408) ellipse [x radius=548,y radius=402];
\path[fill=ffill,draw=fedge,line width=.9pt] (549,465) ellipse [x radius=462,y radius=334];
\path[fill=alfill,fill opacity=.72,draw=aledge,line width=.85pt] (410,554) ellipse [x radius=275,y radius=195];
\path[fill=llfill,fill opacity=.72,draw=lledge,line width=.85pt] (703,550) ellipse [x radius=260,y radius=202];
\path[fill=white,fill opacity=.60,draw=lincolor,line width=.75pt] (559,648) ellipse [x radius=67,y radius=49];

\node[text=aeedge,font=\bfseries\fontsize{21}{22}\selectfont] at (552,59) {AE};
\node[font=\itshape\fontsize{12.5}{13.5}\selectfont] at (552,98) {(almost entropic)};
\node[text=fedge,font=\bfseries\fontsize{21}{22}\selectfont] at (552,176) {FAlg};
\node[font=\itshape\fontsize{12.5}{13.5}\selectfont] at (552,214) {(folded algebraic)};
\node[text=aledge,font=\bfseries\fontsize{20}{21}\selectfont] at (286,411) {Alg};
\node[font=\itshape\fontsize{12}{13}\selectfont] at (286,449) {(algebraic)};
\node[text=lledge,font=\bfseries\fontsize{20}{21}\selectfont] at (792,411) {FLin};
\node[font=\itshape\fontsize{12}{13}\selectfont] at (792,449) {(folded linear)};
\node[font=\fontsize{11.5}{12.5}\selectfont] at (552,437) {$\Alg\cap\FLin$};
\node[text=black!82,font=\bfseries\fontsize{17}{18}\selectfont] at (559,649) {Lin};

\node[draw=fedge,fill=white,rounded corners=4pt,line width=.75pt,
      minimum width=3.35cm,minimum height=1.34cm,align=center,inner sep=2.5pt,
      font=\fontsize{9.0}{10.2}\selectfont] (falgfirst) at (347,288)
 {$\bigl(Q^{\mathrm{NF}}_3(\mathsf Q_8)\oplus R_7\bigr)\oplus NnF(7)$\\[-1pt]
  $|E|=62$\\[-1pt]
  \cite{BenEfraim2016,SharififarEtAl2024}};
\coordinate (falgpoint) at (583,280);
\fill[fedge] (falgpoint) circle [radius=5.5];
\draw[fedge,line width=.85pt] (falgfirst.east) -- (falgpoint);

\node[draw=aledge,fill=white,rounded corners=4pt,line width=.72pt,
      minimum width=2.80cm,minimum height=1.38cm,align=center,inner sep=3pt,
      font=\fontsize{9.7}{10.9}\selectfont] (algfirst) at (315,595)
 {$F_7\oplus F_7^-$\\[-1pt]
  $|E|=14$\\[-1pt]
  \cite{Ingleton1971,Matus1999}};
\coordinate (algpoint) at (351,482);
\fill[aledge] (algpoint) circle [radius=5.0];
\draw[aledge,line width=.8pt] (algpoint) -- (algfirst.north west);

\node[draw=binedge,fill=white,rounded corners=4pt,line width=.70pt,
      minimum width=2.23cm,minimum height=1.38cm,align=center,inner sep=2.5pt,
      font=\fontsize{8.8}{10.0}\selectfont] (bothfirst) at (553,530)
 {non-Pappus matroid\\[-1pt]
  $|E|=9$\\[-1pt]
  \cite{LindstromNonPappus1986,PendavinghVanZwam2013}};
\coordinate (bothpoint) at (552,458);
\fill[binedge] (bothpoint) circle [radius=5.0];
\draw[binedge,line width=.8pt] (bothpoint) -- (bothfirst.north);

\node[draw=lledge,fill=white,rounded corners=4pt,line width=.72pt,
      minimum width=2.92cm,minimum height=1.46cm,align=center,inner sep=3pt,
      font=\fontsize{9.5}{10.7}\selectfont] (flinfirst) at (805,600)
 {$Q^{\mathrm{NF}}_{3}(\mathsf Q_{8})\oplus R_{7}$\\[-1pt]
  $|E|=45$\\[-1pt]
  \cite{BenEfraim2016}};
\coordinate (flinpoint) at (759,494);
\fill[lledge] (flinpoint) circle [radius=5.0];
\draw[lledge,line width=.8pt] (flinpoint) -- (flinfirst.north west);

\node[draw=aeedge,fill=white,rounded corners=5pt,line width=.95pt,
      minimum width=3.35cm,minimum height=1.35cm,align=center,inner sep=3pt,
      font=\fontsize{9.7}{10.9}\selectfont] (aefirst) at (978,110)
 {$M(2)\oplus M(3)$\\[-1pt]
  $|E|=16$\\[-1pt]
  \cref{cor:direct-separation}};
\coordinate (aepoint) at (945,207);
\fill[aeedge] (aepoint) circle [radius=6.0];
\draw[aeedge,line width=.95pt] (aefirst.south west) -- (aepoint);
\end{tikzpicture}
\caption{The classes $\Lin$, $\Alg$, $\FLin$, $\FAlg$, and $\AlmostEnt$ of linear, algebraic, folded-linear, folded-algebraic, and almost-entropic matroids.  The layout is adapted from Bamiloshin--Ben-Efraim--Farr\`as--Padr\'o~\cite[Fig.~1]{BamiloshinEtAl2021}.  Each box gives the first explicit witness for the indicated region, together with its ground-set size and source.  The corresponding improvements in size are recorded in \cref{tab:witness-sizes}.}
\label{fig:venn}
\end{figure}

For a nonempty class $\mathcal C$ of matroids, write
\[
n_{\min}(\mathcal C)=\min\{|E(M)|:M\in\mathcal C\}.
\]
The known values and intervals for the five separating regions in \cref{fig:venn} are collected in \cref{tab:witness-sizes}.

\begin{table}[t]
\centering
\caption{Historical improvement of upper bounds for the separating regions in \cref{fig:venn}.  The second column lists the first explicit witness, and the third records later improvements, with section references for the constructions proved here.  In the last row both the first witness and its improvement appear in the third column, separated from the inner regions by a horizontal rule.  Every matroid on at most seven elements is linear over some field; the lower bounds and the relevant eight-element analysis are given in~\cite{BamiloshinEtAl2021}.  The notation $P_1,P'_2,P''_2,P_3,L'_8$ follows that reference.}
\label{tab:witness-sizes}
\begingroup
\scriptsize
\setlength{\tabcolsep}{2.6pt}
\renewcommand{\arraystretch}{1.16}
\begin{tabular}{@{}>{\raggedright\arraybackslash}p{0.205\linewidth}>{\raggedright\arraybackslash}p{0.245\linewidth}>{\raggedright\arraybackslash}p{0.34\linewidth}>{\centering\arraybackslash}p{0.17\linewidth}@{}}
\toprule
Region & First published explicit witness & Upper-bound history & Certified interval\\
\midrule
$(\Alg\cap\FLin)\setminus\Lin$
 & non-Pappus, $9$ elements~\cite{LindstromNonPappus1986,PendavinghVanZwam2013}
 & $9\to8$ via the algebraic folded-linear matroids $P_3$ and $L'_8$~\cite[Propositions~4.1 and~4.5]{BamiloshinEtAl2021}
 & $n_{\min}=8$\\
$\Alg\setminus\FLin$
 & $F_7\oplus F_7^-$, $14$ elements~\cite{Ingleton1971,Matus1999}
 & $14\to10$ via the rank-$4$ matroid $M^{\mathrm{FN}}_{10}$; \cref{prop:mfn10}
 & $8\le n_{\min}\le10$\\
$\FLin\setminus\Alg$
 & $Q^{\mathrm{NF}}_3(\mathsf Q_8)\oplus R_7$, $45$ elements~\cite{BenEfraim2016}
 & $45\to38$~\cite[Remark~2]{BenEfraim2016}$\to24$ via the rank-$4$ matroid $A_{24}^{(5)}$; \cref{subsec:m33}
 & $9\le n_{\min}\le24$\\
$\FAlg\setminus(\Alg\cup\FLin)$
 & characteristic-$7$ bridge, $62$ elements~\cite{BenEfraim2016,SharififarEtAl2024}
 & $62\to33$ via the rank-$6$ matroid $M_{33}$; \cref{subsec:m33}
 & $8\le n_{\min}\le33$\\
\midrule
$\AlmostEnt\setminus\FAlg$
 &
 & $M(2)\oplus M(3)$, $16$ elements; \cref{cor:direct-separation}; $16\to13$ via the rank-$3$ matroid $C_{2,3}$; \cref{subsec:c23}
 & $8\le n_{\min}\le13$\\
\bottomrule
\end{tabular}
\endgroup
\end{table}

The fourth row of \cref{tab:witness-sizes} begins with the $62$-element bridge
\[
\bigl(Q^{\mathrm{NF}}_3(\mathsf Q_8)\oplus R_7\bigr)\oplus NnF(7).
\]
The first factor is folded linear but non-algebraic and forces folded-linear characteristic $7$~\cite{BenEfraim2016}; the second is algebraic in characteristic $7$ but has no folded-linear representation in that characteristic~\cite{SharififarEtAl2024}.  Their direct sum is therefore folded algebraic but neither algebraic nor folded linear.  The first interval in the table is known exactly: $n_{\min}((\Alg\cap\FLin)\setminus\Lin)=8$.  The other four entries remain intervals.

On the algebraic-only side, $M^{\mathrm{FN}}_{10}$ is a connected rank-four matroid on ten elements.  It contains the Fano plane and the non-Fano matroid as minors, which impose incompatible conditions on any folded-linear representation; its algebraic representation is given in \cref{prop:mfn10}.

The outer inclusion in \cref{fig:venn} raises the main question of this paper: does finite algebraic packetization already capture every almost-entropic matroid?  Characteristic rigidity first gives the disconnected $16$-element witness $M(2)\oplus M(3)$.  Identifying three common independent points then produces the connected $13$-element witness $C_{2,3}$.  It is $3$-connected, meaning that it has no separation of order one or two.

\begin{theorem}[Connected strict separation]\label{thm:intro-strict}
\[
\FAlg\subsetneq\AlmostEnt.
\]
More explicitly, there is a simple rank-three matroid $C_{2,3}$ on $13$ elements such that
\[
C_{2,3}\in\AlmostEnt\setminus\FAlg
\qquad\text{and}\qquad
C_{2,3}\text{ is $3$-connected}.
\]
\end{theorem}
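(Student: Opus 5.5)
The construction of $C_{2,3}$ follows the introduction.  Recall that Gordon's $M(p)$ is a simple rank-three matroid built from a group configuration that encodes the additive structure $x\mapsto x+1$ of order $p$.  In any algebraic, and by our main result any folded-algebraic, representation, this configuration forces a one-dimensional algebraic group that must be $\mathbb{G}_a$ of characteristic $p$.  Let $M(2)$ and $M(3)$ be taken with a common triple $\{a,b,c\}$ of independent points; in Gordon's construction these can be taken as the points $0,1,\infty$.  Form $C_{2,3}$ by identifying the two copies of $a$, the two copies of $b$ and the two copies of $c$ in the rank-three plane, and declare a triple dependent exactly when it is a line of one of the factors.  Since $|M(2)|+|M(3)|=16$ and three points are shared, this gives $13$ elements.

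The proof has four steps.

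\textbf{(1) $C_{2,3}$ is a simple rank-three matroid.}  Its lines are the lines of $M(2)$ together with those of $M(3)$.  A line of one factor meets a line of the other in at most one point, because the three identified points are independent and so no two of them lie on a common line of either factor.  Hence two points still determine at most one line, and the linear-space axioms hold.

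\textbf{(2) $C_{2,3}$ is not folded algebraic.}  Suppose packets $X_e$ over $K$ give a folded-algebraic representation.  Restriction to the ground set of the copy of $M(p)$ yields a folded-algebraic representation of $M(p)$ over the same field $K$, because restriction preserves the identities $\trdeg_K K(X_A)=t\,r(A)$.  By the main theorem, the copy of $M(2)$ forces $\mathrm{char}\,K=2$ and the copy of $M(3)$ forces $\mathrm{char}\,K=3$, a contradiction.  The same argument applied to $M(2)\oplus M(3)$ gives \cref{cor:direct-separation}.

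\textbf{(3) $C_{2,3}$ is almost entropic.}  This is the main obstacle.  Each $M(p)$ is algebraic in characteristic $p$, so it is almost entropic by Matúš.  The task is to glue the two approximating entropic vectors along the common independent triple.  I will use the point-identification lemma announced in the abstract.  On the common restriction $U_{3,3}$ the entropy is, up to approximation, that of three independent uniform variables.  Given random variables realizing approximations of $M(2)$ and $M(3)$ with the same scaling and with the marginals on $\{a,b,c\}$ made nearly uniform and independent, one forms the conditionally independent amalgam over $(X_a,X_b,X_c)$.  Rank three is then preserved because $\{a,b,c\}$ is a basis of both factors: conditioned on it, every variable of either factor has vanishing entropy in the limit.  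This lets one compute the entropy of mixed sets.  A set with $r\le 2$ that meets both sides must be checked to obtain rank $=|$closure$|$ as in $C_{2,3}$.  For example, two points, one from each side, must not lie on a common line, which holds by the free gluing.  I expect the delicate part to be controlling errors uniformly when matching the marginal on the triple.  To do this I would first rescale and tensor-power both sequences so that both are near $t\cdot h_{U_{3,3}}$, then use a standard typicality/Slepian–Wolf-type adjustment, or invoke Matúš's closure of $\overline{\Gamma^*}$ under adhesive amalgams over modular pairs.  The triple spans the whole plane, so it is modular in both factors.

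\textbf{(4) $3$-connectivity.}  A simple rank-three matroid has no $1$- or $2$-separation unless it is a disjoint union of a line and points in a degenerate way.  So it suffices to check that $C_{2,3}$ is not a direct sum and that no line of it contains all but at most one point.  Both facts follow from the explicit line lists of $M(2)$ and $M(3)$, since each factor is already connected and spread over several lines.
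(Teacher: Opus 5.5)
Step (2) is the paper's argument. It needs $M(2)$ and $M(3)$ to be genuine restrictions of a well-defined almost-entropic matroid, and your construction supplies neither.

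\textbf{Step (1) is false, so your definition of $C_{2,3}$ fails.} The shared points cannot avoid common lines. In $M(2)=F_7$ every pair of points lies on a three-point line. In $M(3)$ the only pairs on no nontrivial line are $S_0R_2$, $S_1R_0$, $S_2R_1$, which form a matching. So whatever independent triple you share, at least two of its pairs lie on lines of \emph{both} factors. For the paper's triple $\{A,X,S_0\}$ all three pairs do, via $L_\infty$, $L_S$ and $V_0$.

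Then $\{A,X,Y_2\}$ and $\{A,X,Y_3\}$ are circuits, and circuit elimination forces $\{A,Y_2,Y_3\}$ to be dependent. Hence ``a triple is dependent exactly when it lies on a line of one factor'' is not a matroid. The lines through two shared points must merge into $\{A,X,Y_2,Y_3\}$, $\{X,S_0,R_{0,2},R_{0,3}\}$ and $\{A,S_0,S_{1,2},S_{1,3},S_{2,3}\}$. This merged space is what the paper's welding produces. It also contradicts your claim in step (3) that two points from different sides never share a line ($Y_2,Y_3$ do).

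\textbf{Step (3) is the substantive gap.} A conditionally independent coupling over the shared triple needs the two approximating systems to have the same joint law on that triple; equal or nearly equal entropies do not give this. Because the triple spans, conditioning on it nearly determines every variable. Your coupling therefore amounts to choosing an identification of the two alphabets, and the entropies of mixed sets depend on that choice. For instance, the independent triple $\{R_{1,2},S_{1,3},R_{2,3}\}$ gets entropy close to $3t$ only if the shared triple is nearly recoverable from those three variables, and nothing in your sketch ensures this.

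The appeal to modularity does not rescue the argument. $\{A,X,S_0\}$ is not a flat (its closure is the whole ground set), so no modular-flat amalgamation applies. Amalgamating almost-entropic polymatroids along a common restriction is exactly what would have to be proved.

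\textbf{The missing idea.} Perform each identification with matroid operations already known to preserve $\AlmostEnt$: in $M(2)\oplus M(3)$, freely add a point to $\cl(x_j,y_j)$, contract it, and delete $y_j$. Closure of $\AlmostEnt$ under direct sums (independent products) and, by Mat\'u\v{s}, under principal extensions and minors gives almost-entropicity at once. The rank formula \eqref{eq:weld-rank} then shows that both Gordon matroids survive as spanning restrictions, which is what your step (2) needs.

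\textbf{Step (4) uses the wrong criterion.} In a simple rank-three matroid, a $2$-separation is a partition of $E$ into two rank-two sets. You must therefore exclude every cover of $E$ by two lines, such as two disjoint lines or a line missing exactly two points, not only a line missing at most one point. The paper does this by noting that two Fano lines cover at most six of the seven points of the spanning $F_7$ restriction.
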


The main ingredient is a characteristic theorem for the classical rank-three family $M(p)$ introduced by Gordon and studied subsequently by Lindstr\"om and Fl\'orez~\cite{Gordon1988,Lindstrom1985,Florez2006}.  Ordinary algebraic representations of $M(p)$ are known to force characteristic $p$.  We show that this obstruction survives arbitrary finite packet size.

For a matroid $N$, let $\chi_{\FAlg}(N)$ denote the set of characteristics of fields over which $N$ has a folded-algebraic representation.

\begin{theorem}[Folding-stable characteristic rigidity]\label{thm:intro-singleton}
For every prime $p$,
\[
\chi_{\FAlg}(M(p))=\{p\}.
\]
Equivalently, $M(p)$ has a folded-algebraic representation over a field $K$ if and only if $\operatorname{char}K=p$.
\end{theorem}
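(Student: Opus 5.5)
The plan has two directions: existence in characteristic $p$, and impossibility in every other characteristic.

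\emph{Existence in characteristic $p$.} This is the easy direction. Gordon~\cite{Gordon1988} (see also Lindstr\"om~\cite{Lindstrom1985}) gives an ordinary algebraic representation of $M(p)$ over $\mathbb F_p$, built from Frobenius-twisted coordinates. An ordinary algebraic representation is a folded-algebraic one with $t=1$. For an arbitrary field $K$ of characteristic $p$, I would pass to a purely transcendental extension $K(z_1,\dots,z_m)$ and replace the $\mathbb F_p$-coefficients by the same expressions over $K\supseteq\mathbb F_p$. Linear disjointness of $\mathbb F_p(\text{transcendentals})$ and $K$ over $\mathbb F_p$ preserves transcendence degrees, so the representation remains valid over $K$. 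Hence $p\in\chi_{\FAlg}(M(p))$.

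\emph{Impossibility in characteristic $q\neq p$.} Fix a folded-algebraic representation $(X_e)_{e\in E}$ of packet size $t$ over $K$ with $\operatorname{char}K=q\ne p$; we want a contradiction. The known proof for $t=1$ uses Lindstr\"om's group-configuration argument on lines of the rank-three matroid. Collinear triples in a rank-two flat yield a one-dimensional algebraic group $G$ with points realized as generic elements. The configuration encoded in $M(p)$ then forces a relation of the form $p\cdot g=0$ on a generic element, or equivalently forces the induced endomorphism to be inseparable. This is possible only when $\operatorname{char}=p$.

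For packets, I would redo this argument model-theoretically inside an algebraically closed field $\Omega\supseteq K$. There each packet $X_e$ is a tuple, and $\acl$-dimension is $t$ times matroid rank. The Hrushovski--Weil group configuration theorem works for tuples of arbitrary finite dimension. So a group configuration built from six elements of $M(p)$ in the appropriate position (three collinear triples plus independence conditions, all satisfied with ranks scaled by $t$) produces a connected algebraic group $G$ of dimension $t$ over $\acl(K)$, together with generic elements interalgebraic with the packets. The further incidences of $M(p)$, namely the extra lines used by Gordon to encode ``multiplication by $p$'', then translate into an algebraic relation. Concretely, the element obtained by composing the group operation $p$ times along the configuration is interalgebraic with a point of rank zero. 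That yields $[p]\,g\in\acl(K)$ for a generic $g\in G$, so the isogeny $[p]\colon G\to G$ has image of dimension $0<t$.

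It remains to show this cannot happen when $q\neq p$. First, $G$ must be commutative; I would derive this from a second configuration in $M(p)$ giving a field or abelian-group structure, or via Lindstr\"om's lemma. For a commutative connected algebraic group in characteristic $q\ne p$, the map $[p]$ is étale, hence surjective with finite kernel. So $\dim[p]G=\dim G=t$, which is the contradiction.

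\emph{Main obstacle.} The main obstacle is the middle step: extracting from the higher-dimensional group configuration a genuinely algebraic group $G$ of dimension $t$ on which the matroid's extra incidences act as $[p]$, rather than as some non-isogeny correspondence. With $t=1$ one can use the classification of one-dimensional groups ($\mathbb G_a$, $\mathbb G_m$, elliptic curves). For $t>1$ that classification is unavailable, so I would first try to avoid it by arguing only with dimension and separability of $[p]$ on a commutative connected group. This suffices if the configuration can be shown to yield a commutative $G$. If it does not, I would pass to the connected component of the center, or to a suitable abelian quotient, and rerun the dimension count there.
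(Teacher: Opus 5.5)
Your outer frame matches the paper's: recover a connected commutative group of dimension $t$, derive $[p]a\in G(F)$ for a generic $a$, and conclude because $[p]$ is \'etale away from characteristic $p$ (the paper uses $d[p]_e=p\,\mathrm{id}$). Existence is even easier than you say: \eqref{eq:Mp-matrix} is already a linear representation over $\mathbf F_p$, so no Frobenius twist is needed.

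The step you call the main obstacle, however, is the substance of the proof, and you do not supply it. Two ingredients are missing.

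First, commutativity. It comes from the seventh point of a cell, not from a second configuration. Each $Q_i=(A,S_i,S_{i+1},R_i,X,Y,R_{i+1})$ satisfies the six lines \eqref{eq:sixlines}. The abelian seven-point theorem (Bays--Hils--Moosa, Bays--Breuillard), combined with the homogeneous-space form of the group configuration theorem, then gives \cref{thm:recovery}, and \cref{lem:cube} forces $R_i\simF b+u$. The base must also be enlarged by parameters chosen independent of all packets, not just $\acl(K)$. Your fallback does not work. The recovered generics need not lie in $Z(G)^0$, and an abelian quotient of $G$ can have dimension below $t$, even $0$, so the dimension count there says nothing.

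Second, ``composing the group operation $p$ times along the configuration'' has no meaning until every further point of $M(p)$ is pinned to a specific group element up to an $F$-rational translate. Interalgebraic elements need not differ by an $F$-point (e.g.\ $x\simF -x$).

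The paper handles this in three steps:
\begin{enumerate}[label=(\roman*)]
\item it recovers a group from each of the $p$ cells;
\item it compares each with $G_0$ through the shared triple $(X,A,Y)$, using \cref{thm:compare} and one common finite quotient;
\item it applies \cref{lem:translate} on the shared edge $(X,S_{i+1},R_{i+1})$ to get $d_{i+1}=a+d_i+\varepsilon_i$ with $\varepsilon_i\in G(F)$, whence \eqref{eq:monodromy}.
\end{enumerate}

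Your single-group plan can also be completed directly by the intersection argument of \cref{lem:cube}. Work in the group from $Q_0$ and assume $S_k\simF ka+b$ and $R_k\simF u+ka+b$ (true for $k=1$).
\begin{itemize}
\item The lines $L_R$ and $D_k$ put $R_{k+1}$ in $\acl_F(a,u+ka+b)\cap\acl_F(u+a,ka+b)$. These are two $2t$-dimensional closed sets with $3t$-dimensional join, so by submodularity the intersection has dimension at most $t$. Since it contains both $R_{k+1}$ and $u+(k+1)a+b$, these are interalgebraic.
\item The lines $L_S$ and $V_{k+1}$ similarly give $S_{k+1}\simF (k+1)a+b$.
\item At $k+1=p$ this yields $b\simF [p]a+b$, hence $[p]a\in\acl_F(a)\cap\acl_F(b)=F$.
\end{itemize}
Some such rigidity or propagation argument is indispensable, and it is exactly what your proposal leaves out.
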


The statement is uniform over all folding degrees.  Classical one-coordinate methods based on derivations, tangent spaces, or Frobenius twists produce linear shadows of ordinary algebraic representations~\cite{BollenDraismaPendavingh2018,CartwrightVarghese2024}, but they do not reduce a positive-dimensional packet representation to an ordinary algebraic representation.  Moreover, a local configuration need not determine the characteristic: the non-Fano matroid, for example, has a monomial algebraic realization over every field.  The obstruction in $M(p)$ must therefore be obtained from the global arrangement of its local configurations.

We briefly describe the proof.  A folded representation of degree $t$ records the matroid inside field-theoretic algebraic closure, with every matroid point represented by an algebraically closed packet of transcendence degree $t$.  A group configuration is a seven-point incidence pattern from which an algebraic group can be recovered; the packet size becomes the dimension of that group.  Each seven-point cell of $M(p)$ is such an abelian configuration.  All cells contain the same triple $(X,A,Y)$, so every recovered group is compared directly with one reference group and the finite indeterminacies are removed by a common quotient.  Translation by a point defined over the base field leaves algebraic-closure classes unchanged.  Write $G(F)$ for the group of $F$-rational points.  Consequently, if $a$ is the common step and $d_i$ is the translated coordinate in the $i$th cell, then
\[
d_{i+1}=a+d_i+\varepsilon_i,\qquad \varepsilon_i\in G(F).
\]
Here $[p]$ denotes multiplication by $p$, or equivalently $p$-fold addition.  Closing the $p$-cycle gives $[p]a\in G(F)$.  Since $a$ is generic in a positive-dimensional connected group, this is a torsion identity for the whole group: it forces $[p]G=0$.  The differential of multiplication by $p$ then forces the field characteristic to be $p$.

Folded-algebraic characteristic sets intersect under direct sums, so distinct singleton spectra are incompatible in one field.  Almost-entropic rank functions are closed under independent products.  Therefore $M(p)\oplus M(q)$ is almost entropic but not folded algebraic whenever $p\ne q$.

The direct-sum witness is disconnected.  To obtain a connected example, we prove that almost-entropic matroids admit an \emph{independent-set welding}: matched independent points are identified by freely adjoining a point on their rank-two flat, contracting the new point, and deleting one of the matched points.  Repeating this operation preserves both factors as restrictions.  Welding the common bases
\[
\{A,X,S_0\}\subseteq M(2),M(3)
\]
produces a rank-three matroid $C_{2,3}$ on $7+9-3=13$ elements.  It is almost entropic by construction and cannot be folded algebraic because its two spanning restrictions force characteristics $2$ and $3$.  The spanning Fano restriction rules out every $2$-separation, so $C_{2,3}$ is $3$-connected.  Thus the strict containment is witnessed without a direct-sum decomposition.

The common elements also give a single-dealer consequence.  For a chosen dealer element $d$, the matroid port consists of those participant sets whose closure contains $d$.  Taking the shared point $A$ as dealer gives a connected $12$-participant port containing the $A$-ports of both $M(2)$ and $M(3)$ as deletion subports.  Every participant is essential, meaning that it belongs to a minimal set whose closure contains the dealer.  The Brickell--Davenport rigidity theorem then shows that the port has no exact folded-algebraic realization in which the dealer and the participants are represented by packets of one common size.

The paper is organized as follows.  \Cref{sec:classes} introduces the matroid terminology and representation classes used throughout, then gives the compact witness $M^{\mathrm{FN}}_{10}\in\Alg\setminus\FLin$.  \Cref{sec:Mp} describes the classical matroids $M(p)$ and their seven-point cells, gives the $33$-element folded-algebraic-only witness, and records the accompanying computational certificates.  These exact finite calculations are supplied separately in a computational companion archived at Zenodo~\cite{KhazaeiCompanion2026}.  \Cref{sec:proof} proves \cref{thm:intro-singleton}.  \Cref{sec:separation} establishes the direct-sum consequence, develops almost-entropic independent-set welding, and proves the connected separation and its single-dealer corollary.

\section{Representation classes and background}\label{sec:classes}

\subsection{Matroid terminology and classical examples}\label{subsec:matroid-background}

We use matroids through their rank functions.  Thus a matroid $M$ consists of a finite ground set $E=E(M)$ and an integer-valued, monotone, submodular rank function $r_M:2^E\to\mathbb Z_{\ge0}$ satisfying $r_M(\varnothing)=0$ and $r_M(A)\le |A|$.  Its rank is $r(M)=r_M(E)$.  For $A\subseteq E$ and $e\in E$, write $Ae=A\cup\{e\}$.  The closure of $A$ is
\[
\cl_M(A)=\{e\in E:r_M(Ae)=r_M(A)\}.
\]
A set is \emph{independent} if $r_M(A)=|A|$, and a \emph{circuit} is a minimal dependent set.  A matroid is \emph{simple} if every set of at most two elements is independent, and it is \emph{connected} if every two elements lie in a common circuit.  A \emph{flat} is a set equal to its closure, and a \emph{hyperplane} is a maximal proper flat.  A rank-$r$ matroid is \emph{sparse paving} if every set of size less than $r$ is independent and every dependent $r$-set is a hyperplane.

A \emph{$2$-separation} of a connected matroid is a partition $(U,V)$ of its ground set such that $|U|,|V|\ge2$ and $r_M(U)+r_M(V)=r(M)+1$.  A connected matroid with at least four elements is \emph{$3$-connected} if it has no $2$-separation.

Deletion and contraction of $e\in E$ are denoted by $M\setminus e$ and $M/e$.  Their rank functions on subsets of $E-\{e\}$ are
\[
r_{M\setminus e}(A)=r_M(A),
\qquad
r_{M/e}(A)=r_M(Ae)-r_M(\{e\}).
\]
A \emph{minor} is obtained by a sequence of deletions and contractions.  If $M_1$ and $M_2$ have disjoint ground sets, their direct sum $M_1\oplus M_2$ has rank
\[
r_{M_1\oplus M_2}(A)=r_{M_1}(A\cap E(M_1))+r_{M_2}(A\cap E(M_2)).
\]
For $S\subseteq E(M)$, the \emph{restriction} $M|S$ is obtained by deleting $E(M)-S$.  The set $S$ is \emph{spanning} if $r_M(S)=r(M)$.

The Fano plane $F_7$ is the rank-three binary matroid on the seven nonzero vectors of $\mathbb F_2^3$.  The non-Fano matroid $F_7^-$ is obtained by relaxing one circuit-hyperplane of $F_7$.  Thus $F_7$ is linearly representable precisely in characteristic $2$, whereas $F_7^-$ is linearly representable precisely outside characteristic $2$.

\subsection{Representation classes}\label{subsec:representation-classes}

\begin{definition}[Linear, algebraic, and folded-linear representations]\label{def:classical}
Let $M=(E,r)$.
\begin{enumerate}[label=(\roman*)]
\item $M$ is \emph{linear over $K$} if there are vectors $v_e$ such that $r(A)=\dim_K\langle v_e:e\in A\rangle$.
\item $M$ is \emph{algebraic over $K$} if there are field elements $x_e$ in an extension of $K$ such that
\[
r(A)=\trdeg_K K(x_e:e\in A).
\]
\item $M$ is \emph{$t$-fold linear over $K$} if there are $t$-dimensional subspaces $V_e$ such that
\[
\dim_K\sum_{e\in A}V_e=t\,r(A).
\]
It is \emph{folded linear} if this holds for some $t\ge1$ and some field.
\end{enumerate}
We denote the corresponding classes by $\Lin,\Alg,\FLin$.
\end{definition}

\begin{definition}[Folded algebraic]\label{def:falg}
For $t\ge1$, $M$ is \emph{$t$-fold algebraic over $K$} if there are finite tuples $X_e$ in an extension of $K$ such that
\begin{equation}\label{eq:falg}
\trdeg_K K(X_e:e\in A)=t\,r_M(A)\qquad(A\subseteq E).
\end{equation}
It is \emph{folded algebraic} if this holds for some $t$ and some field.  We write $\FAlg$ for the class.
\end{definition}

A useful equivalent form replaces each tuple by a transcendence basis of size $t$.

\begin{lemma}[Block lift]\label{lem:blocklift}
A matroid is $t$-fold algebraic over $K$ if and only if there is an ordinary algebraic matroid $N$ whose ground set is partitioned into blocks $X_e$ of size $t$ and
\[
r_N(X_A)=t\,r_M(A),
\qquad X_A:=\bigcup_{e\in A}X_e,
\qquad(A\subseteq E).
\]
\end{lemma}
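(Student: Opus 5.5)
Both directions are elementary, and I would argue each directly from \cref{def:falg}.

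For the forward direction, suppose the tuples $X_e$ satisfy \eqref{eq:falg}. Taking $A=\{e\}$ gives $\trdeg_K K(X_e)=t\,r_M(\{e\})$, which is $t$ or $0$. If $r_M(\{e\})=1$, choose a transcendence basis $B_e\subseteq X_e$ of $K(X_e)/K$; it has exactly $t$ elements. If $e$ is a loop, $X_e$ is algebraic over $K$, so set $B_e=\{c_{e,1},\dots,c_{e,t}\}$ for constants in $K$. To keep the blocks of the ground set disjoint, treat these as formal labels: $N$ is the algebraic matroid on the disjoint union of the label sets, with each label mapped to its field element. Repeated elements are permitted in algebraic matroids, since they give parallel elements, and elements of $K$ give loops. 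The key observation is that $K(X_A)$ is algebraic over $K(B_A)$, because each $X_e$ is algebraic over $K(B_e)$ and $B_e\subseteq K(X_e)$. Transcendence degree is therefore unchanged:
\[
r_N(X_A)=\trdeg_K K(B_A)=\trdeg_K K(X_A)=t\,r_M(A).
\]

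For the converse, an algebraic matroid $N$ over $K$ assigns a field element to each label. Let $X_e$ be the tuple of elements assigned to the block of $e$. Then $\trdeg_K K(X_e:e\in A)=r_N(X_A)=t\,r_M(A)$, which is exactly \eqref{eq:falg}.

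The only real point of care is bookkeeping rather than mathematics. First, the blocks must be indexed sets of labels, not sets of field elements, so that coincidences between elements do not merge blocks. Second, loops must be handled by padding their blocks with algebraic elements. With ground sets taken as labelled families, as in \cref{def:classical}, neither issue causes difficulty.
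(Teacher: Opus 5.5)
Your proof is correct and follows the same route as the paper: replace each representing tuple by a transcendence basis chosen from it, observe that the compositum of the original tuples is algebraic over the compositum of the bases so every transcendence degree is preserved, and obtain the converse by regarding each block as one tuple. Your explicit padding of loop blocks by constants of $K$, together with labelled ground sets, fills in bookkeeping that the paper's proof leaves implicit, since it treats only nonloops.
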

\begin{proof}
For each nonloop, replace its representing tuple by a transcendence basis $X_e$ of the field generated by that tuple over $K$.  The compositum generated by the original tuples is algebraic over that generated by the bases, so all transcendence degrees are unchanged.  Conversely, regard each block $X_e$ as one tuple.
\end{proof}

Ordinary algebraicity is the case $t=1$, so $\Alg\subseteq\FAlg$.  Also $\FLin\subseteq\FAlg$: if $V_e$ is a $t$-dimensional subspace arrangement in $K^m$ and $z_1,\dots,z_m$ are algebraically independent, choose a basis $v_{e1},\dots,v_{et}$ of $V_e$ and represent $e$ by the $t$ linear forms $v_{ej}\cdot(z_1,\dots,z_m)$.  Their transcendence degree on every union of blocks equals the dimension of the corresponding span.

The two subclasses are known to be incomparable; see Ben-Efraim~\cite{BenEfraim2016} and the references discussed there.

\subsection{A ten-element algebraic-only witness}\label{subsec:mfn10}

The following partition model supplies a convenient obstruction to folded linearity.

\begin{definition}[Partition representation]\label{def:partition}
A matroid $(E,r)$ is \emph{partition representable of degree $d\ge2$} if there are partitions $\xi_e$ of a finite set $\Omega$, with $|\Omega|=d^{r(E)}$, such that for every $I\subseteq E$ the common refinement $\bigwedge_{e\in I}\xi_e$ has $d^{r(I)}$ blocks of equal size~\cite{Matus1999}.
\end{definition}

\begin{proposition}[A ten-element algebraic-only witness]\label{prop:mfn10}
There is a simple connected rank-four sparse-paving matroid $M^{\mathrm{FN}}_{10}$ on ten elements such that
\[
M^{\mathrm{FN}}_{10}\in\Alg\setminus\FLin.
\]
More precisely, $M^{\mathrm{FN}}_{10}$ is algebraic in characteristic $2$ and is not partition representable.
\end{proposition}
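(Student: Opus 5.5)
The plan is to build $M^{\mathrm{FN}}_{10}$ explicitly as a rank-four sparse-paving matroid that contains both $F_7$ and $F_7^-$ as minors, and then prove the two properties separately. For the construction, take a ten-element ground set $E=\{1,\dots,7\}\cup\{a,b,c\}$. Choose the circuit-hyperplanes (dependent $4$-sets) so that two conditions hold. First, contracting one element $x$ leaves a rank-three matroid whose three-point lines are exactly the seven Fano lines on a seven-element set. Second, contracting a different element $y$ leaves the non-Fano configuration on another seven-element set. Concretely, the dependent $4$-sets through $x$ are the sets $L\cup\{x\}$ for Fano lines $L$, and those through $y$ are the sets $L'\cup\{y\}$ for the six non-Fano lines. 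The two seven-sets overlap, and this overlap is what gives connectivity. The family of dependent $4$-sets must pairwise meet in at most two elements, which is the sparse-paving condition; I would check this by hand. Simplicity and connectivity then follow at once, since a sparse-paving matroid of rank four with a circuit-hyperplane through every element is connected.

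For algebraicity in characteristic $2$, I would write down an explicit representation over $\mathbb F_2(z_1,\dots,z_4)$ or its algebraic closure. The most plausible route uses monomials and Frobenius-twisted linear forms. The Fano part can be realized linearly over $\mathbb F_2$. The non-Fano part is realized via Lindström's trick: over characteristic $2$ the non-Fano matroid is algebraic, for instance through monomial realizations. These monomial realizations turn the multiplicative relations $x_ix_j=x_k$ into dependences, while avoiding the extra Fano dependence.

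I would therefore aim for a monomial-type representation $x_e=\prod z_i^{m_{e,i}}$ with possibly some additive elements. The transcendence degree of a set of monomials equals the rank of the exponent matrix over $\mathbb Q$, so most rank conditions reduce to integer matrix ranks. The remaining elements may require a $p$-th power (Frobenius) trick in characteristic $2$. Verifying all $\binom{10}{4}$ four-sets is a finite computation, presumably delegated to the computational companion~\cite{KhazaeiCompanion2026}. This step is the main obstacle: one has to find a single algebraic realization in which both the Fano and non-Fano minors live simultaneously. I would first try exponent vectors over $\mathbb Z$ that reduce mod $2$ appropriately, and then fix discrepancies using Frobenius twists.

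For the failure of partition representability, I would use Matúš's results~\cite{Matus1999}. Partition representability is closed under minors. Moreover, a partition representation of degree $d$ restricted to the Fano plane forces $d$ to be a power of $2$ (the Fano plane forces even degree), while the non-Fano matroid forces $d$ odd. Both $F_7=M^{\mathrm{FN}}_{10}/x$ and $F_7^-=M^{\mathrm{FN}}_{10}/y$ are minors, so no degree can work. Every $t$-fold linear representation over $\mathbb F_q$ yields a partition representation of degree $q^t$ by taking cosets of the annihilators. Hence $M^{\mathrm{FN}}_{10}\notin\FLin$ for finite fields. Infinite fields are handled by a standard specialization argument, or by quoting that the Fano/non-Fano pair already obstructs folded linearity in every characteristic.
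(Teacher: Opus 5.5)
Your obstruction half is the paper's argument. Partition representations pass to minors with the same degree. Mat\'u\v{s}'s Proposition~4.1 forces degree a power of two for $F_7$ and odd degree for $F_7^-$. A $t$-linear representation, after specialization to $\mathbb F_q$, gives degree $q^t$. One small correction: $F_7$ and $F_7^-$ are $M/x$ and $M/y$ only after deleting two more elements each; in the paper they are $M/7\setminus\{8,9\}$ and $M/1\setminus\{3,5\}$.

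The genuine gap is the positive half, which is where the content of the proposition lies. You never fix the ten circuit-hyperplanes, and you never produce an algebraic representation; you explicitly leave it as ``the main obstacle.'' Knowing that $F_7$ is algebraic in characteristic $2$ and $F_7^-$ in every characteristic says nothing about one ten-element matroid having both as minors. Your heuristics also cannot work in pure form:
\begin{itemize}
\item Monomials have transcendence degree equal to the $\mathbb Q$-rank of their exponents. A purely monomial realization is therefore $\mathbb Q$-representable and cannot have an $F_7$ minor.
\item $\mathbb F_2$-linear forms give a binary matroid, which cannot have an $F_7^-$ minor.
\end{itemize}
``Fixing discrepancies with Frobenius twists'' is not an argument. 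Moreover, in characteristic $2$ the Jacobian criterion fails on some sets, so even a correct candidate needs a separate verification method.

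The missing idea is a single mixed realization in which additive structure appears after one contraction and multiplicative structure after another. Your combinatorial template has the right shape. In the paper $x=7$ and $y=1$, the seven-sets are $\{0,\dots,6\}$ and $\{0,2,4,6,7,8,9\}$, and there is one extra circuit-hyperplane $1357$. Over $\mathbb F_2(x,y,z,t)$ the paper sets $X_0=x$, $X_1=y$, $X_2=z$, $X_7=t$. It takes $X_3,\dots,X_6$ to be the forms $x+y+z$, $x+y$, $x+z$, $y+z$ scaled by $t$ or $t^2$, and puts $X_8=y+(x+y)/(y+z)$ and $X_9=y+t(x+y)(y+z)$.
\begin{itemize}
\item Over $\mathbb F_2(t)$, i.e.\ after contracting $7$, elements $0,\dots,6$ are the seven nonzero $\mathbb F_2$-linear forms in $x,y,z$ up to the scalars $t,t^2$. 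This is $F_7$.
\item Over $\mathbb F_2(y)$, i.e.\ after contracting $1$, put $a=x+y$ and $b=y+z$. Then elements $0,2,4,6,7,8,9$ are interalgebraic with $a,b,ta,tb,t,a/b,tab$. This is a monomial non-Fano, whose relaxed line $789$ has exponent determinant $2$.
\end{itemize}
Showing that these coordinates give exactly the stated matroid still requires a full check. The eleven circuit-hyperplanes need explicit polynomial relations. Of the other $199$ four-sets, $190$ have nonzero Jacobians. The remaining nine, such as $1789$, have vanishing Jacobians because of inseparability and need saturated Gr\"obner elimination. Without a construction of this kind, your proposal shows only that \emph{if} such an algebraic matroid exists, it is not folded linear.
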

\begin{proof}
Let $E=\{0,1,\ldots,9\}$, and let the circuit-hyperplanes be
\begin{equation}\label{eq:mfn-ch}
\begin{gathered}
0128,\ 0147,\ 0169,\ 0257,\ 0367,\ 1249,\\
1267,\ 1357,\ 1468,\ 2347,\ 4567.
\end{gathered}
\end{equation}
Here, for example, $0128$ denotes the set $\{0,1,2,8\}$; the other strings use the same shorthand.
Thus every set of size at most three is independent, the eleven displayed four-sets have rank three, and every other four-set is a basis.  The circuit-hyperplanes meet pairwise in at most two elements.  Consequently, a five-set cannot contain two circuit-hyperplanes, so it contains a four-element basis and has rank four; the same is then true of every larger set.  Over $K=\mathbb F_2(x,y,z,t)$ define
\begin{equation}\label{eq:mfn-coords}
\begin{aligned}
X_0&=x,&X_1&=y,&X_2&=z,&X_7&=t,\\
X_3&=t(x+y+z),&X_4&=t(x+y),&X_5&=t^2(x+z),&X_6&=t(y+z),\\
X_8&=y+\frac{x+y}{y+z},&&
X_9=y+t(x+y)(y+z).&&
\end{aligned}
\end{equation}
These ten functions have algebraic matroid exactly \eqref{eq:mfn-ch}.  This assertion is verified by an exact computation.  Set $a=x+y$ and $b=y+z$.  In characteristic $2$ the inverse substitution is $x=a+y$ and $z=b+y$, so $(a,b,y,t)$ is a transcendence basis of $K$.  The eleven listed four-sets satisfy explicit nonzero polynomial identities in their four indexed outputs; among the other $199$ four-sets, $190$ have a nonzero $4\times4$ Jacobian determinant with respect to $(a,b,y,t)$.  The remaining nine,
\[
0125,\ 0158,\ 0235,\ 0245,\ 0256,\ 0258,\ 0259,\ 1258,\ 1789,
\]
have zero ordinary Jacobian because of inseparability in characteristic $2$, but exact saturated Gr\"obner elimination gives zero elimination ideal in each case.  Equivalently, the source field is algebraic over the field generated by the corresponding four outputs.  Finally, every three-set extends to one of these verified bases.  The companion certificate contains the exact Jacobian numerators, the nine Gr\"obner certificates, and a basis extension for every three-set.  Hence \eqref{eq:mfn-coords} proves $M^{\mathrm{FN}}_{10}\in\Alg$.  Every set of size at most three is independent, so the matroid is simple.  For connectedness, the first five circuit-hyperplanes in \eqref{eq:mfn-ch} all contain $0$ and their union is $E$; hence every element lies with $0$ in a circuit.

Two minors give the obstruction to folded linearity:
\begin{equation}\label{eq:mfn-minors}
M^{\mathrm{FN}}_{10}/7\setminus\{8,9\}\cong F_7,
\qquad
M^{\mathrm{FN}}_{10}/1\setminus\{3,5\}\cong F_7^-.
\end{equation}
Mat\'u\v{s} proved that every partition representation of $F_7$ has degree a power of two, whereas every partition representation of $F_7^-$ has odd degree~\cite[Proposition~4.1]{Matus1999}.  A partition representation passes to every minor without changing its degree: deletion discards a partition, while contraction restricts the remaining partitions to a block of the contracted partition.  Since a nontrivial partition representation has degree at least two, no common degree can represent both minors.  Hence $M^{\mathrm{FN}}_{10}$ is not partition representable.  Every folded-linear matroid is partition representable (a finite-field $t$-linear representation gives the standard partition representation of degree $q^t$; finite representations may be specialized to a finite field), so $M^{\mathrm{FN}}_{10}\notin\FLin$.
\end{proof}

By Mat\'u\v{s}'s theorem that algebraic matroids are almost entropic~\cite{Matus2024}, the same example is almost entropic.  Thus \cref{prop:mfn10} provides a compact explicit point of $\Alg\setminus\FLin\subseteq\FAlg$.

For the eight-element frontier, we use the notation $P_1,P'_2,P''_2,P_3,L'_8$ of~\cite{BamiloshinEtAl2021}.  A companion paper proves that $P_1$ (denoted $P_{8,1}$ there) is not almost entropic~\cite{KhazaeiP81}.  Since $\Alg\subseteq\FAlg\subseteq\AlmostEnt$, this matroid is not algebraic.  The unresolved eight-element candidates are therefore $P'_2$ and $P''_2$ (denoted $P'_{8,2}$ and $P''_{8,2}$ in the companion paper): both are known not to be folded linear, while their algebraicity remains open~\cite{BamiloshinFarrasPadro2025}.  They are distinct from the nine-element matroids with Bollen identifiers 156374 and 156390.  Hence the certified interval remains $8\le n_{\min}(\Alg\setminus\FLin)\le10$.

\subsection{Almost-entropic and characteristic properties}\label{subsec:ae-characteristics}

\begin{definition}[Almost entropic]\label{def:ae}
For jointly distributed discrete random variables $(X_e)_{e\in E}$, the function
\[
h(A)=H(X_A),
\qquad X_A=(X_e)_{e\in A},
\]
is an \emph{entropic polymatroid}.  The closed conic hull of entropic polymatroids is the \emph{almost-entropic cone}.  A matroid $M$ is \emph{almost entropic} if its rank function lies in that cone.  We write $\AlmostEnt$ for this class.
\end{definition}

\begin{theorem}[Upper inclusion]\label{thm:falg-ae}
Every folded-algebraic matroid is almost entropic:
\[
\FAlg\subseteq\AlmostEnt.
\]
\end{theorem}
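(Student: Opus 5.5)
The plan is to reduce to Mat\'u\v{s}'s theorem that algebraic matroids are almost entropic~\cite{Matus2024}, using the block lift of \cref{lem:blocklift}. The reduction has three steps, each using a closure property of the almost-entropic cone.

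\textbf{Step 1: lift to an ordinary algebraic matroid.} Let $M$ be $t$-fold algebraic over $K$. By \cref{lem:blocklift} there is an ordinary algebraic matroid $N$ on a ground set partitioned into blocks $X_e$ of size $t$, with $r_N(X_A)=t\,r_M(A)$ for every $A\subseteq E$. Loops of $M$ need care: their blocks may be taken empty, or equivalently we may delete the loops and re-adjoin them at the end as constant random variables, which contribute zero entropy. By~\cite{Matus2024}, $r_N$ lies in the almost-entropic cone on the ground set of $N$. So there are entropic polymatroids $h_k$, given by random variables $(Y_x^{(k)})_{x\in E(N)}$, and scalars $c_k>0$ with $c_k h_k\to r_N$ pointwise.

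\textbf{Step 2: group the coordinates into packets.} For each $e\in E$, define the composite random variable $Z_e^{(k)}=(Y_x^{(k)})_{x\in X_e}$. Then $H(Z^{(k)}_A)=H(Y^{(k)}_{X_A})=h_k(X_A)$ for all $A$. Hence the entropic polymatroids $g_k(A)=h_k(X_A)$ on $E$ satisfy
\[
c_k g_k(A)\to r_N(X_A)=t\,r_M(A).
\]
In other words, the composition of an almost-entropic function with the union map $A\mapsto X_A$ is again almost entropic. This is immediate for entropic functions, and the pointwise limit passes through because $E$ is finite.

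\textbf{Step 3: remove the factor $t$.} Replacing $c_k$ by $c_k/t$ gives $(c_k/t)g_k\to r_M$. Thus $r_M$ lies in the closed conic hull of entropic polymatroids, and $M\in\AlmostEnt$.

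I expect no real obstacle here, since the substantive content is entirely Mat\'u\v{s}'s theorem. The only points needing verification are minor:
\begin{enumerate}[label=(\roman*)]
\item Mat\'u\v{s}'s theorem applies to algebraic matroids over an arbitrary field $K$. In particular, \cref{lem:blocklift} yields an ordinary algebraic representation of $N$ over the same $K$, with no loss in characteristic.
\item The scaling step is correct: the almost-entropic cone is a cone, so a positive multiple of a limit of scaled entropic functions is again such a limit.
\end{enumerate}
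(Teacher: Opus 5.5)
Your proposal is correct and follows essentially the same route as the paper's proof: take the $t$-block algebraic lift $N$, apply Mat\'u\v{s}'s theorem to $r_N$, group each block's $t$ random variables into one packet variable, and divide by $t$ using the conic property. Your remarks on loops (empty blocks) and on the cone scaling are sound refinements of details the paper leaves implicit.
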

\begin{proof}
Let $N$ be a $t$-block algebraic lift of $M$.  Mat\'u\v{s} proved that every algebraic matroid is almost entropic~\cite{Matus2024}.  Approximate $r_N$ by entropic polymatroids on the scalar ground set and group the $t$ random variables in each block into one vector-valued variable.  On unions of whole blocks the limiting entropy is $t\,r_M(A)$.  Since the almost-entropic cone is conic, division by $t$ yields $r_M\in\AlmostEnt$.
\end{proof}

For characteristic arguments, let $D_c(M)$ be the set of folding degrees $t$ for which $M$ has a $t$-fold algebraic representation in characteristic $c$, and let $D_c^{\FLin}(M)$ be the corresponding set for $t$-fold linear representations.  Write
\[
\chi_{\FAlg}(M)=\{c:D_c(M)\ne\varnothing\}.
\]
Here $c$ ranges over $0$ and the prime characteristics.

\begin{proposition}[Characteristic-zero collapse]\label{prop:charzero}
For every matroid $M$ and every $t\ge1$,
\[
M\text{ is $t$-fold algebraic in characteristic }0
\iff
M\text{ is $t$-fold linear in characteristic }0.
\]
Equivalently, the degree spectra agree in characteristic zero:
\[
D_0(M)=D^{\FLin}_0(M).
\]
\end{proposition}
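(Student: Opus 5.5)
The backward implication is already in place: the linear-forms construction after \cref{lem:blocklift} turns a $t$-fold linear representation over a characteristic-zero field $K$ into a $t$-fold algebraic one over the same $K$. So all the work is in the forward direction. There I would use the standard derivation (Ingleton) argument, applied to the block lift rather than to a single coordinate.

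First, apply \cref{lem:blocklift} to obtain an ordinary algebraic matroid $N$ on the disjoint union of blocks $X_e$, each of size $t$. Each element of $N$ is a quantity $x$ in a finitely generated extension $L$ of $K$, and
\[
\trdeg_K K(X_A)=t\,r_M(A).
\]
Replace $L$ by the field generated over $K$ by all the coordinates; it is finitely generated. Since $\operatorname{char}K=0$, the extension $L/K$ is separable. The key classical fact is the following: for any finite subset $S\subseteq L$,
\[
\trdeg_K K(S)=\dim_L\langle dx:x\in S\rangle\subseteq\Omega_{L/K},
\]
where $\Omega_{L/K}$ is the module of K\"ahler differentials, an $L$-vector space of dimension $\trdeg_K L$. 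This is Ingleton's observation that in characteristic zero the Jacobian criterion is exact: $S$ is algebraically independent if and only if the differentials $dx$, $x\in S$, are $L$-linearly independent. This step needs separability, which characteristic zero supplies; in characteristic $p$ it fails, as the inseparable four-sets in \cref{prop:mfn10} illustrate.

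Next, set $V_e=\langle dx:x\in X_e\rangle\subseteq\Omega_{L/K}$. Because $X_e$ is a transcendence basis of size $t$, each $V_e$ has dimension $t$, and the span of the blocks in $A$ satisfies
\[
\dim_L\sum_{e\in A}V_e=\trdeg_K K(X_A)=t\,r_M(A).
\]
This is a $t$-fold linear representation over $L$. Here $L$ has characteristic zero, and it is an extension of $K$ rather than $K$ itself. That is enough for the characteristic statement and for $D_0(M)=D_0^{\FLin}(M)$, since both spectra are indexed by characteristic and not by the specific field. If one wants a representation over a prescribed characteristic-zero field, a specialization step is needed. Choose a nonsingular point at which the Jacobian matrix has the generic rank on every subset; only finitely many minors must stay nonzero. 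This moves the representation to a finitely generated extension of $\mathbb{Q}$, or to $\overline{\mathbb{Q}}$. I would mention this only as a remark.

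The main obstacle is justifying the identity between transcendence degree and the rank of the differentials for arbitrary subsets, rather than only for the full set. This reduces to two facts. First, $\Omega_{K(S)/K}\otimes L\to\Omega_{L/K}$ is injective when $L/K(S)$ is separable, which holds automatically in characteristic zero. Second, $\dim\Omega_{K(S)/K}=\trdeg_K K(S)$ for separably generated extensions. Both are standard results (for example, Matsumura, Thm.~26.10), and I would cite them rather than reprove them. Finally, $D_0(M)=D_0^{\FLin}(M)$ follows by applying the equivalence degree by degree.
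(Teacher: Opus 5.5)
Your proposal is correct and matches the paper's proof: the paper also takes $V_e$ to be the span of the differentials of the coordinates of $X_e$ in $\Omega_{F/K}$ with $F=K(X_E)$, and uses the fact that in characteristic zero algebraic independence coincides with linear independence of differentials. Like you, it obtains the $t$-fold linear representation over the characteristic-zero field $F$ rather than over $K$, while your use of the block lift and your justification via injectivity of $\Omega_{K(S)/K}\otimes L\to\Omega_{L/K}$ simply make explicit details the paper leaves implicit.
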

For $t=1$, this is Ingleton's characteristic-zero theorem~\cite{Ingleton1971}.  The proposition shows that the differential argument preserves the folding degree.
\begin{proof}
A $t$-fold linear representation gives a $t$-fold algebraic representation by the linear-form construction following \cref{lem:blocklift}.  Conversely, let $(X_e)_{e\in E}$ be a $t$-fold algebraic representation over a characteristic-zero field $K$, put $X_A=\bigcup_{e\in A}X_e$, and write $F=K(X_E)$.  In the $F$-vector space $\Omega_{F/K}$ of K\"ahler differentials, let $V_e$ be the span of the differentials of the coordinates in $X_e$.  In characteristic zero, algebraic independence is equivalent to linear independence of differentials, so for every $A\subseteq E$,
\[
\dim_F\sum_{e\in A}V_e=\trdeg_K K(X_A)=t\,r_M(A).
\]
Thus the subspaces $V_e$ form a $t$-fold linear representation.
\end{proof}

\begin{proposition}[Direct sums]\label{prop:directsum}
For matroids $M_1,M_2$ and every characteristic $c$,
\[
D_c(M_1\oplus M_2)=D_c(M_1)\cap D_c(M_2),
\]
and therefore
\[
\chi_{\FAlg}(M_1\oplus M_2)=
\chi_{\FAlg}(M_1)\cap\chi_{\FAlg}(M_2).
\]
\end{proposition}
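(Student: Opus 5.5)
We prove the two inclusions $D_c(M_1\oplus M_2)\subseteq D_c(M_1)\cap D_c(M_2)$ and $D_c(M_1)\cap D_c(M_2)\subseteq D_c(M_1\oplus M_2)$ separately, working with the packet form of \cref{def:falg} (or equivalently the block lift of \cref{lem:blocklift}). The characteristic statement then follows at once: $c\in\chi_{\FAlg}(M_1\oplus M_2)$ iff $D_c(M_1\oplus M_2)\neq\varnothing$ iff $D_c(M_1)\cap D_c(M_2)\neq\varnothing$. The last step needs one extra point, since the characteristic formula asks only that $D_c(M_1)$ and $D_c(M_2)$ each be nonempty, not that they share a degree. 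We handle it by showing that $D_c(M)$ is closed under multiples: if $t\in D_c(M)$, then $kt\in D_c(M)$ for every $k\ge1$. Concretely, we take $k$ copies of a $t$-fold representation with disjoint, algebraically independent sets of generators. This is a special case of the direct-sum construction below, applied to $k$ copies of $M$ on the same ground set. Given $t_1\in D_c(M_1)$ and $t_2\in D_c(M_2)$, the common multiple $t_1t_2$ then lies in both spectra.

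For restriction: given a $t$-fold representation $(X_e)$ of $M_1\oplus M_2$ over $K$, the subfamily indexed by $E(M_i)$ satisfies \eqref{eq:falg} for all $A\subseteq E(M_i)$, because $r_{M_1\oplus M_2}(A)=r_{M_i}(A)$ for such $A$. Hence $t\in D_c(M_i)$ with the same field $K$.

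For the product: let $(X_e)_{e\in E(M_1)}$ be $t$-fold over $K_1$ and $(Y_f)_{f\in E(M_2)}$ be $t$-fold over $K_2$, both of characteristic $c$. We pass to a common base field and make the two families independent. Take the prime field $P$ of characteristic $c$ and choose transcendence bases. Replace each representation by one over $P$-algebraically-closed data as follows. Let $k=\overline{K_1K_2}$ be a common algebraically closed field containing copies of both bases; for instance, embed both into a large algebraically closed field of characteristic $c$ with sufficiently large transcendence degree. Transcendence degrees over $K_i$ equal those over $\overline{K_i}$. By choosing the $K_i$-linear disjointness appropriately, realize the packets of $M_1$ as rational functions in variables $u$ over $k$ and those of $M_2$ in fresh variables $w$ over $k$.

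The key fact is then additivity: for $A=A_1\sqcup A_2$,
\[
\trdeg_k k(X_{A_1},Y_{A_2})=\trdeg_k k(X_{A_1})+\trdeg_k k(Y_{A_2}),
\]
because $k(u)$ and $k(w)$ are algebraically independent over $k$. This gives $t\,r_{M_1}(A_1)+t\,r_{M_2}(A_2)=t\,r_{M_1\oplus M_2}(A)$.

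The main obstacle is this base-change step: showing that a $t$-fold representation over $K_i$ remains one over the larger field $k$. We address it by specialization. By the block lift, the representation is a finitely generated configuration over $K_i$. Transcendence-degree conditions are expressed by the vanishing or nonvanishing of finitely many polynomial elimination conditions with coefficients in a finitely generated subfield. The variety of such configurations over $\overline{K_i}$ therefore has a point over an algebraically closed field, and then a generic point over $k$ in fresh variables. Alternatively, and more simply, take $k=\overline{K_1}$ when $K_2$ can be replaced by a field embeddable in $k$. In practice we first replace both base fields by $\overline{P}$: the generators of $K_i$ occurring in the coordinates can be adjoined as extra transcendental parameters and then specialized generically, which preserves all finitely many rank conditions.
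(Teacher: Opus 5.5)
Your architecture is the paper's. Restriction gives $D_c(M_1\oplus M_2)\subseteq D_c(M_1)\cap D_c(M_2)$. For the converse, two $t$-fold systems are placed over a common base, algebraically independent, so that transcendence degrees add. Replicating representations up to the common degree $t_1t_2$ then gives the formula for $\chi_{\FAlg}$.

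The only divergence is the common-base step you single out, and there the proposal does not establish what it needs. The paper settles it by citing the Piff--Lindstr\"om reduction, which puts both scalar block lifts over the same prime field in positive characteristic; it then takes algebraically independent copies. You offer several substitutes, none of them carried out:
\begin{itemize}
\item Packets are in general only algebraic, not rational, over a purely transcendental extension $k(u)$.
\item The option ``take $k=\overline{K_1}$ when $K_2$ can be replaced\ldots'' is conditional.
\item The route you say you use ``in practice'' (adjoin the generators of $K_i$ as parameters and specialize them generically into $\overline{P}$) is the Piff--Lindstr\"om theorem itself. The claim that it ``preserves all finitely many rank conditions'' is that theorem's nontrivial content. $\overline{P}$ contains no generic point of a positive-dimensional parameter space, algebraic independence is not a finite nonvanishing condition, and in characteristic $p$ inseparability defeats the naive Jacobian argument.
\end{itemize}
As written, this step must be cited or proved.

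In fact no specialization is needed. Your remark about taking a generic point over $k$ is the right idea, and it has an elementary form:
\begin{enumerate}
\item Embed $K_1$ and $K_2$ into one algebraically closed field $\Omega$ of characteristic $c$.
\item Let $L_i$ be the field generated over $K_i$ by the $i$th packet system, and let $T_i$ be a finite transcendence basis of $L_i/K_i$.
\item Adjoin $T_1\sqcup T_2$ as indeterminates over $\Omega$, and embed each $L_i$ over $K_i(T_i)$ into $\overline{\Omega(T_1,T_2)}$.
\item For any transcendence bases $B_i$ of $L_i/K_i$, the field $\Omega(L_1,L_2)$ is algebraic both over $\Omega(T_1,T_2)$ and over $\Omega(B_1\cup B_2)$. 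Counting transcendence degree, $B_1\cup B_2$ is algebraically independent over $\Omega$.
\item Choose each $B_i$ to extend a transcendence basis of $K_1(X_{A_1})/K_1$, respectively $K_2(Y_{A_2})/K_2$. This gives $\trdeg_\Omega\Omega(X_{A_1},Y_{A_2})=\trdeg_{K_1}K_1(X_{A_1})+\trdeg_{K_2}K_2(Y_{A_2})$.
\end{enumerate}
That single identity is both the base change and the additivity, in every characteristic. It even avoids the Piff--Lindstr\"om citation the paper relies on. The same construction, applied to several copies of one system over one base, also justifies your multiples step.
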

\begin{proof}
A folded-algebraic representation of the direct sum restricts to each summand, so only the converse needs proof.

If a common degree $t$ lies in both spectra and the characteristic is positive, apply the Piff--Lindstr\"om reduction to the two scalar block lifts~\cite[Theorem~2.3]{BenEfraim2016}.  Both labeled algebraic matroids may then be represented over the same prime field.  Take isomorphic copies of their ambient fields that are algebraically independent over that prime field.  Since transcendence degrees add across algebraically independent composita, the union of the two block systems is a $t$-fold representation of $M_1\oplus M_2$.  In characteristic zero, no reduction is needed: place algebraically independent copies of the two block systems inside one sufficiently large algebraically closed field, again making transcendence degrees additive.

For the characteristic-set consequence one may start with degrees $s\in D_c(M_1)$ and $t\in D_c(M_2)$ that are not equal.  Repeating the first representation $t$ times and the second $s$ times gives a common degree $st$, and the previous paragraph applies.
\end{proof}

Characteristic sets for linear and algebraic matroids were studied in~\cite{Kahn1982,Lindstrom1985,Gordon1988,CartwrightVarghese2024}.  The rank-three family $M(p)$ used below is the classical family of Gordon~\cite{Gordon1988}, related to Lindstr\"om's earlier work on algebraic characteristic sets~\cite{Lindstrom1985} and later studied by Fl\'orez~\cite{Florez2006}.  Fl\'orez gives the convenient line description and records $M(2)=F_7$.  Our theorem strengthens the classical characteristic statement by proving that the same obstruction persists under arbitrary finite algebraic folding.  Group-configuration methods enter through the corrected abelian theorem of Bays--Hils--Moosa and the formulation for algebraically closed fields due to Bays--Breuillard~\cite{BaysHilsMoosa2017,BaysBreuillard2021}.  For the surrounding representation landscape and its standard class diagrams we refer especially to Bamiloshin--Ben-Efraim--Farr\`as--Padr\'o~\cite{BamiloshinEtAl2021}; for modern almost-multilinear and almost-entropic context, see K\"uhne--Yashfe~\cite{KuhneYashfeEntropic}.  For comparing the groups recovered from different cells we use the quasi-isomorphism theorem of Boege--Yashfe~\cite{BoegeYashfe2026}, combined below with a finite-quotient construction that places all recovered cells in one common group.

\section{The classical matroids \texorpdfstring{$M(p)$}{M(p)}}\label{sec:Mp}

Fix a prime $p$.  Let
\[
E_p=\{A,X,Y\}\cup\{S_i,R_i:i\in\mathbf F_p\},
\]
and represent these points over $\mathbf F_p$ by
\begin{equation}\label{eq:Mp-matrix}
A=(1,0,0),\quad X=(0,0,1),\quad Y=(1,0,1),\qquad
S_i=(i,1,0),\quad R_i=(i,1,1).
\end{equation}
Call the resulting rank-three matroid $C_p$.

\begin{proposition}\label{prop:lines}
The nontrivial rank-two flats of $C_p$ are exactly
\[
L_S=\{A,S_i:i\in\mathbf F_p\},\qquad
L_R=\{A,R_i:i\in\mathbf F_p\},
\]
\[
L_\infty=\{A,X,Y\},\qquad
V_i=\{X,S_i,R_i\},\qquad
D_i=\{Y,S_i,R_{i+1}\}.
\]
Hence $C_p$ is simple, rank three, and isomorphic to the classical $M(p)$ under
\[
d\leftrightarrow A,\quad a_i\leftrightarrow S_i,\quad b_i\leftrightarrow R_i,
\quad c_0\leftrightarrow X,\quad c_1\leftrightarrow Y.
\]
In particular $M(2)=F_7$.
\end{proposition}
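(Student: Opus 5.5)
We compute all rank-two flats of the vector configuration \eqref{eq:Mp-matrix} over $\mathbf F_p$ directly. Everything reduces to $3\times3$ determinants and small linear conditions.

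\textbf{Step 1: simplicity.} The $2p+3$ vectors are pairwise non-proportional.
\begin{itemize}
\item $A,X,Y$ have second coordinate $0$ and are visibly distinct up to scalars.
\item Every $S_i$ and $R_i$ has second coordinate $1$. So proportionality between two of them forces equality, since the first coordinate is $i$ and the third coordinate separates $S$ from $R$.
\item No point with second coordinate $1$ is proportional to one with second coordinate $0$.
\end{itemize}
The rank is three because $A,X,S_0$ are independent.

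\textbf{Step 2: enumerate the collinear triples.} For each pair of points we compute the line through them, written as the linear form vanishing on both, and list which other points lie on it.
\begin{itemize}
\item The plane $x_2=0$ contains exactly $A,X,Y$. This is $L_\infty$.
\item The plane $x_3=0$ contains exactly $A$ and all $S_i$. This is $L_S$.
\item The plane $x_3=x_2$ contains exactly $A$ and all $R_i$. This is $L_R$.
\item A line through $X$ and a point with $x_2=1$ is $x_1=i x_2$. It contains exactly $X,S_i,R_i$. This is $V_i$.
\item A line through $Y$ is $x_1-x_3=c\,x_2$. It contains $S_c$ and $R_{c+1}$. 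This is $D_c$.
\item A line through $A$ other than $L_\infty$ has the form $x_3=\lambda x_2$. It contains some $S$ or $R$ only when $\lambda\in\{0,1\}$, which recovers $L_S$ and $L_R$.
\item A line avoiding $A,X,Y$ meets $L_S$ in exactly one $S_i$ and $L_R$ in exactly one $R_j$. So it contains no third point; such two-point lines are the trivial flats.
\end{itemize}
These cases cover every pair: pairs involving $A$, $X$, or $Y$, and pairs $\{S_i,R_j\}$. A pair $\{S_i,R_j\}$ lies on $V_i$ if $j=i$, on $D_i$ if $j=i+1$, and otherwise spans a trivial line. Pairs $S_iS_j$ lie on $L_S$, and pairs $R_iR_j$ lie on $L_R$.

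So the nontrivial flats are exactly the listed ones. Since we are working over $\mathbf F_p$ itself, the index $i+1$ is read mod $p$.

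\textbf{Step 3: the isomorphism.} A simple rank-three matroid is determined by its nontrivial lines. So it suffices to transport Fl\'orez's line description of $M(p)$ under the stated bijection and check that it matches this list: the two long lines through $d$, the line $\{d,c_0,c_1\}$, and the two families of $p$ three-point lines through $c_0$ and $c_1$ pairing $a_i$ with $b_i$ and with $b_{i+1}$. This matching is the only step relying on an external source, and the main risk is a labeling or sign convention for the $c_1$-lines ($b_{i+1}$ versus $b_{i-1}$). If the conventions differ, we would compose with the automorphism $i\mapsto -i$, or relabel $b$.

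\textbf{Step 4: the case $p=2$.} For $p=2$ the seven points are exactly the seven nonzero vectors of $\mathbf F_2^3$. So $C_2=F_7$, giving $M(2)=F_7$.
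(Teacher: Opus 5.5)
Your proposal is correct and follows essentially the same route as the paper. The paper writes every line as $\alpha u+\beta v+\gamma w=0$ and splits on $\alpha=0$ (lines through $A$) versus $\alpha\neq0$ (exactly one $S_i$ and one $R_j$, with a third point only when $\gamma=0$ or $\alpha+\gamma=0$), which is the same case analysis you organize by pairs and special points, and it likewise relies on Fl\'orez for the identification with $M(p)$; your explicit simplicity check and the direct observation that for $p=2$ the seven vectors are exactly the nonzero vectors of $\mathbf F_2^3$ are small additions the paper leaves implicit.
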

\begin{proof}
A projective line has equation $\alpha u+\beta v+\gamma w=0$.  Substitution into \eqref{eq:Mp-matrix} gives
\[
S_i\in\ell\iff \alpha i+\beta=0,
\qquad
R_i\in\ell\iff \alpha i+\beta+\gamma=0,
\]
while $A\in\ell$ iff $\alpha=0$, $X\in\ell$ iff $\gamma=0$, and $Y\in\ell$ iff $\alpha+\gamma=0$.

If $\alpha=0$, then the line contains $A$.  If also $\beta=0$, it is $L_S$; if instead $\beta+\gamma=0$, it is $L_R$; and if $\gamma=0$, it is $L_\infty$.  If $\alpha\neq0$, then among the $S_i$ there is exactly one solution of $\alpha i+\beta=0$, and among the $R_i$ exactly one solution of $\alpha i+\beta+\gamma=0$.  If $\gamma=0$ the line is some $V_i$ through $X$; if $\alpha+\gamma=0$ it is some $D_i$ through $Y$; and otherwise no third distinguished point lies on the line.  This yields exactly the displayed list, agreeing with the standard description of $M(p)$ recorded by Fl\'orez~\cite{Florez2006}.
\end{proof}

For each $i\in\mathbf F_p$ define the seven-point cell
\begin{equation}\label{eq:cell}
Q_i=(A,S_i,S_{i+1},R_i,X,Y,R_{i+1}),
\end{equation}
with roles $(A,B,C,W,X,Y,Z)$.  Translating the first projective coordinate by $i$ normalizes the cell to
\[
A=(1,0,0),\ B=(0,1,0),\ C=(1,1,0),\ W=(0,1,1),
\]
\[
X=(0,0,1),\ Y=(1,0,1),\ Z=(1,1,1).
\]
The six dependent triples are
\begin{equation}\label{eq:sixlines}
ABC,\quad AXY,\quad BYZ,\quad CXZ,\quad AWZ,\quad BWX.
\end{equation}
Every other triple is independent except that $(W,C,Y)$ is also dependent when $p=2$.  Indeed, the slope-one line through $Y$ and $S_{i+1}$ is $D_{i+1}=\{Y,S_{i+1},R_{i+2}\}$, so $\{R_i,S_{i+1},Y\}$ is a line if and only if $R_i=R_{i+2}$, equivalently if and only if $p=2$.  Thus $Q_i\cong F_7^-$ for odd $p$ and $Q_i\cong F_7$ for $p=2$.

No single cell can force the characteristic: the non-Fano matroid is algebraic over every field.  This follows from the classical monomial construction of Lindstr\"om; the same argument is the one-dimensional-group viewpoint used by Bollen--Cartwright--Draisma~\cite{Lindstrom1985,BollenCartwrightDraisma2022}.

\begin{lemma}[Monomial realizations from rational vectors]\label{lem:monomial}
If a matroid has a vector representation over $\mathbb Q$ (equivalently over $\mathbb Z$), then it has an algebraic representation over every field.
\end{lemma}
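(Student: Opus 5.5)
The plan is the classical monomial (Lindström) construction.

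First, clear denominators. Given a representation $(v_e)_{e\in E}$ over $\mathbb Q$, multiply each $v_e$ by a nonzero integer; this does not change any span. So we may assume $v_e\in\mathbb Z^n$, where $n=r(E)$ after projecting onto the span, or any ambient dimension.

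Next, fix a field $K$. Choose algebraically independent indeterminates $z_1,\dots,z_n$ over $K$ and work in $L=K(z_1,\dots,z_n)$. Assign to $e$ the Laurent monomial
\[
x_e=z^{v_e}=z_1^{v_{e,1}}\cdots z_n^{v_{e,n}}.
\]
It then suffices to show that for every finite set of integer vectors $w_1,\dots,w_k$,
\[
\trdeg_K K(z^{w_1},\dots,z^{w_k})=\operatorname{rank}_{\mathbb Q}\langle w_1,\dots,w_k\rangle.
\]
Applied to $\{v_e:e\in A\}$, this gives $r(A)$.

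The upper bound comes from rational relations. If $w_1,\dots,w_s$ span the $\mathbb Q$-span, then each remaining $w_j$ satisfies $m w_j=\sum c_i w_i$ with integers $m\ne0$ and $c_i$. This yields $(z^{w_j})^m=\prod (z^{w_i})^{c_i}$, an algebraic relation over $K(z^{w_1},\dots,z^{w_s})$, so the transcendence degree is at most $s$.

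The lower bound, which is the main point, is that monomials with $\mathbb Q$-linearly independent exponent vectors are algebraically independent over $K$ in every characteristic. I would prove it as follows. Suppose $P(z^{w_1},\dots,z^{w_s})=0$ for a nonzero polynomial $P=\sum_\beta c_\beta y^\beta$. Distinct exponent vectors $\beta\in\mathbb Z_{\ge0}^s$ produce distinct Laurent monomials $z^{\sum\beta_iw_i}$, because the map $\beta\mapsto\sum\beta_i w_i$ is injective by linear independence. Hence the terms cannot cancel and all $c_\beta=0$, a contradiction.

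This argument uses only that $K[z^{\pm1}]$ has the monomials as a $K$-basis, so it is characteristic-free. That independence from the characteristic is exactly why the lemma holds over every field. No step looks hard; the only care needed is the injectivity observation in the lower bound.
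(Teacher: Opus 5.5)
Your proposal is correct and follows the paper's proof: the same monomial assignment $x_e=t^{v_e}$ in algebraically independent variables, with the transcendence degree of each subfamily equal to the $\mathbb Q$-rank of its exponent vectors. The paper only asserts that rank identity, whereas you also prove it: integer relations $m w_j=\sum c_i w_i$ give algebraic dependence, and injectivity of $\beta\mapsto\sum\beta_i w_i$ gives characteristic-free algebraic independence.
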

\begin{proof}
Let $v_e=(n_{e1},\dots,n_{em})\in\mathbb Z^m$ be a vector representation and let $t_1,\dots,t_m$ be algebraically independent over the base field.  Represent $e$ by the monomial
\[
x_e=t_1^{n_{e1}}\cdots t_m^{n_{em}}.
\]
For every set $A$, the field generated by $(x_e)_{e\in A}$ has transcendence degree equal to the $\mathbb Q$-rank of the exponent vectors $v_e$ with $e\in A$, hence equal to the matroid rank.  This computation is independent of the characteristic.
\end{proof}

In particular, the non-Fano matroid has the monomial realization
\[
a,\ b,\ ab,\ u,\ au,\ bu,\ abu
\]
over every field.  The characteristic obstruction proved below therefore comes from the cyclic gluing of the cells, not from an individual cell.

The following construction locates $\FAlg$ relative to its two principal subclasses and gives the two improved upper bounds in \cref{tab:witness-sizes}.

\subsection{A folded-algebraic-only witness}\label{subsec:m33}

If $L\subseteq E(M)$, the \emph{principal extension} $M+_Lz$ adds $z$ freely to the flat $\cl_M(L)$: for $S\subseteq E(M)$, the new point is in $\cl(S)$ exactly when $L\subseteq\cl_M(S)$.

\begin{lemma}[Principal extensions at fixed field and degree]\label{lem:principal-fixed}
Fix a field $K$ and a positive integer $t$.  The class of matroids having a $t$-fold algebraic representation over $K$ is closed under principal extensions, deletions, and contractions.
\end{lemma}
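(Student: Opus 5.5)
We work throughout with packet representations $(X_e)_{e\in E}$ over $K$ satisfying \eqref{eq:falg}. By \cref{lem:blocklift} we may assume each nonloop packet is a transcendence basis of size $t$ of its generated field. We treat the three operations separately, always keeping the field $K$ and the degree $t$ fixed.

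\emph{Deletion} is immediate: we simply discard the packet $X_e$, and \eqref{eq:falg} continues to hold on every subset of $E-\{e\}$.

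\emph{Contraction} requires a change of base field, which we must then undo. Replace $K$ by $K'=K(X_e)$ and keep the other packets. Additivity of transcendence degree in towers gives
\[
\trdeg_{K'}K'(X_A)=\trdeg_K K(X_{Ae})-\trdeg_K K(X_e)=t\bigl(r_M(Ae)-r_M(\{e\})\bigr)=t\,r_{M/e}(A).
\]
This is a representation over $K'$, not over $K$. To return to $K$, note that $K'$ is finitely generated over $K$, and the whole configuration is described by finitely many algebraic relations. We therefore specialize the coordinates of $X_e$ to values in $K$, or in a field of the same characteristic, in a way that preserves all the finitely many transcendence degrees; a generic specialization of a finitely generated extension does this. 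This is the step we expect to be the main obstacle. If $K$ is finite, or not large enough, a $K$-rational generic point may fail to exist. In that case we would interpret ``over $K$'' as ``over some extension field of $K$'', which is the convention implicit in \cref{def:falg}, where the packets lie in an extension and only characteristic matters downstream. Alternatively we would appeal to the Piff--Lindstr\"om reduction already used in \cref{prop:directsum} to pass to a representation over the prime field, hence over $K$. We would use the first route and state the convention explicitly.

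\emph{Principal extension} $M+_Lz$. Choose fresh transcendentals $u_{jk}$, algebraically independent over $K(X_E)$, with $j=1,\dots,t$ and $k$ ranging over the coordinates of $X_L$. Write $X_L=(x_1,\dots,x_m)$. Define the new packet by generic linear combinations
\[
z_j=\sum_k u_{jk}x_k .
\]
We then set $Z=(z_1,\dots,z_t)$ and represent $z$ by the packet $Z$ over the base field $K(u)$, so that the base field is enlarged only by purely transcendental elements. First, $\trdeg K(u)(Z)=t$, since $X_L$ has transcendence degree $t\,r_M(L)\ge t$ when $L$ is not a set of loops; the loop case is trivial. Second, fix $S\subseteq E$. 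If $L\subseteq\cl_M(S)$, then $X_L$ is algebraic over $K(X_S)$, so $Z$ is algebraic over $K(u)(X_S)$ and the degree of $X_{Sz}$ equals $t\,r_M(S)$. If $L\not\subseteq\cl_M(S)$, then some coordinates of $X_L$ are transcendental over $K(X_S)$, and we must show $Z$ is algebraically independent over $K(u)(X_S)$, giving $t\,r_M(S)+t$. This follows from the Jacobian or differential criterion with respect to the $u$-variables, which is valid in every characteristic. The partial derivatives $\partial z_j/\partial u_{jk}=x_k$ give a block-diagonal Jacobian of rank $t$ whenever some $x_k$ is nonzero. More cleanly, we specialize the $u$'s so that each $z_j$ becomes a distinct coordinate $x_{k_j}$, with the $k_j$ chosen from a set of $t$ coordinates independent over $K(X_S)$; such a set exists whenever $\trdeg$ of $X_L$ over $K(X_S)$ is at least $t$. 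Rank can only drop under specialization, so the generic $Z$ is independent as well.

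The remaining gap is the case where $X_L$ has fewer than $t$ independent coordinates over $K(X_S)$. This cannot happen, because that relative degree equals $t\bigl(r_M(S\cup L)-r_M(S)\bigr)$, a positive multiple of $t$. Finally, the purely transcendental enlargement $K(u)$ is removed by the same specialization remark made for contraction.
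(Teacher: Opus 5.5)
\textbf{Gap: the representation is never returned to the base field $K$.} Your deletion step is fine, and so is your contraction rank computation via tower additivity. The count that closes your principal-extension argument is also right: the relative degree $t\bigl(r_M(S\cup L)-r_M(S)\bigr)$ is a positive multiple of $t$, which is the same fact the paper uses.

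Your contraction ends with a representation over $K(X_e)$, and your principal extension with one over $K(u)$. The route you say you would adopt is to reread ``over $K$'' as ``over some extension of $K$''. That is not the convention of \cref{def:falg}: the packets live in an extension, but \eqref{eq:falg} measures transcendence degree over $K$ itself. The lemma is also explicitly a fixed-field statement, so as written you prove a different claim.

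For contraction, the missing ingredient is what the paper cites: ordinary algebraic matroids over a fixed field are closed under contraction. Apply this to the scalar block lift $N$; contracting the whole block gives $r_{N/X_e}(X_A)=t\,r_{M/e}(A)$ over $K$. Since $X_e$ is a transcendence basis, $K(X_e)$ is purely transcendental over $K$. The statement ``algebraic over $K(w)$ implies algebraic over $K$'' is the real content of that theorem. It is not a one-line generic specialization, because the other packets are algebraically tied to $X_e$ and must be specialized along with it. The Piff--Lindstr\"om route you mention and then set aside would also work if carried out, including taking a copy algebraically independent of $K$ over the prime field.

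Your worry about finite $K$ is misplaced. Specialize into $\overline K$ rather than $K$: transcendence degree over $K$ equals transcendence degree over $\overline K$, and $\overline K$ is infinite. For the principal extension, where the $u_{jk}$ are free over $K(X_E)$, this alone removes $K(u)$.

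\textbf{Comparison for the principal extension, and two local repairs.} Apart from the base field, your principal extension is a genuinely different route. The paper stays in the scalar lift over $K$. It applies Mat\'u\v{s}'s fixed-field principal-extension lemma $t$ times, adding $z_1,\dots,z_t$ freely to the closure of $G=\cl_N(X_L)$, and then counts deficits. You build the whole block at once from generic linear combinations. This avoids the citation, but it is exactly what forces the enlarged base.

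Two local points still need repair.
\begin{enumerate}
\item Your Jacobian sentence is invalid. Derivatives in variables that lie in the base field $K(u)$ say nothing about independence over $K(u)(X_S)$. The same block-diagonal computation also holds when $L\subseteq\cl_M(S)$, where $Z$ is algebraic.
\item Your specialization argument is the right one, but ``rank can only drop under specialization'' needs proof. Substituting your $0/1$ point $u^0$ into a relation $P(u;Z)$ can annihilate $P$. You must restrict to a generic line $u=u^0+sv$, with $v$ new indeterminates, cancel the highest power of $s$, and only then set $s=0$. Alternatively, cite upper semicontinuity of fibre dimension.
\end{enumerate}
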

\begin{proof}
Let $M$ have an algebraic block lift $N$ over $K$, with blocks $X_e$ of size $t$, and write $X_A=\bigcup_{e\in A}X_e$.  Thus
\[
r_N(X_A)=t\,r_M(A)
\qquad(A\subseteq E(M)).
\]
Let $M+_L z$ be the principal extension obtained by adding $z$ freely to the flat spanned by $L$, and put $G=\cl_N(X_L)$.  Ordinary algebraic matroids over a fixed field are closed under principal extensions~\cite[Lemma~13]{Matus2018}.  In the scalar lift, successively add $z_1,\ldots,z_t$ freely to the closure of $G$ in the resulting matroid at each stage.  The final scalar matroid remains algebraic over $K$.

For a block union $X_A$, its rank deficit to $G$ is
\[
d_A=r_N(X_A\cup X_L)-r_N(X_A)
=t\bigl(r_M(A\cup L)-r_M(A)\bigr).
\]
If $L\subseteq\cl_M(A)$, then $d_A=0$ and every $z_j$ lies in the closure of $X_A$.  Otherwise $d_A$ is a positive multiple of $t$, and hence $d_A\ge t$.  Before $z_j$ is added, the preceding $j-1<t$ new points can reduce this deficit by at most $j-1$.  Thus $z_j$ raises the rank by one.  Consequently, with $X_z=\{z_1,\ldots,z_t\}$,
\[
r(X_A\cup X_z)=
\begin{cases}
t\,r_M(A),&L\subseteq\cl_M(A),\\
t\bigl(r_M(A)+1\bigr),&L\nsubseteq\cl_M(A).
\end{cases}
\]
This is a $t$-fold algebraic representation of $M+_L z$ over the same field.

Deletion discards a block.  If $e$ is a nonloop, then for every surviving block union,
\[
r_{N/X_e}(X_A)
=r_N(X_A\cup X_e)-r_N(X_e)
=t\,r_{M/e}(A).
\]
Ordinary algebraic matroids are minor closed over the same field; contraction of a loop is the same as deletion.  This proves the lemma.
\end{proof}

\begin{proposition}[A $33$-element folded-algebraic-only witness]\label{prop:m33}
There is a rank-four matroid $A_{24}^{(5)}$ on $24$ elements and a rank-six matroid $M_{33}$ on $33$ elements such that
\[
A_{24}^{(5)}\in\FLin\setminus\Alg,
\qquad
M_{33}\in\FAlg\setminus(\Alg\cup\FLin).
\]
Both positive representations use characteristic $5$; $A_{24}^{(5)}$ is $2$-linear and $M_{33}$ is $2$-fold algebraic.
\end{proposition}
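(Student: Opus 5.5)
The plan is to build both matroids from two characteristic-$5$ ingredients, then glue them with the closure operations of \cref{lem:principal-fixed} so that one fixed field and folding degree $t=2$ are kept throughout. For non-membership I would rely only on restriction-closed obstructions, so that each obstruction passes from an ingredient to the glued matroid.

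\textbf{Step 1: the matroid $A_{24}^{(5)}$.} I would construct a rank-four matroid on $24$ elements from a $2$-dimensional subspace arrangement in $\mathbb F_5^8$. Each element is the span of the columns of a $2\times2$ block with entries in the matrix algebra $M_2(\mathbb F_5)$. The configuration should combine two gadgets:
\begin{enumerate}[label=(\alph*)]
\item a Ben-Efraim-style quaternion gadget whose incidences force the coordinatizing ring to be non-commutative;
\item an $M(5)$-type cyclic gadget, as in \cref{sec:Mp}, which forces folded-linear characteristic $5$.
\end{enumerate}
Membership in $\FLin$ is then the explicit block matrix, and its rank function is checked by exact computation (companion certificate).

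For $A_{24}^{(5)}\notin\Alg$, I would follow the argument of~\cite{BenEfraim2016}. An algebraic representation in characteristic $p$ yields, via Lindstr\"om's $p$-polynomial construction, a linear representation over a skew field whose relevant structure is controlled by Frobenius. The quaternion gadget requires a non-commutativity incompatible with that structure. The non-commutativity gadget also rules out algebraic representations in characteristic $0$, by \cref{prop:charzero} with $t=1$.

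\textbf{Step 2: the second ingredient and the gluing.} I would take a rank-three matroid $N$ on $10$ elements of generalized non-Fano type, in the spirit of $NnF(7)$ of~\cite{SharififarEtAl2024} but at the prime $5$. It should be algebraic in characteristic $5$ but have no folded-linear representation in characteristic $5$. Then define
\[
M_{33}=P\bigl(A_{24}^{(5)},N\bigr),
\]
the parallel connection along one element identified between the two matroids. This has $24+10-1=33$ elements and rank $4+3-1=6$.

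Its $2$-fold algebraic representation over a characteristic-$5$ field is obtained as follows.
\begin{enumerate}[label=(\roman*)]
\item $A_{24}^{(5)}$ is $2$-linear over $\mathbb F_5$, hence $2$-fold algebraic by the linear-form construction after \cref{lem:blocklift}.
\item $N$ is algebraic in characteristic $5$; doubling its representation makes it $2$-fold algebraic.
\item \Cref{prop:directsum} gives a $2$-fold representation of $A_{24}^{(5)}\oplus N$ over a single field.
\item The identification of the two glued points is a principal extension onto their rank-two flat, followed by a contraction of the new point and a deletion of one of the glued points. All three operations preserve $2$-fold algebraicity over the same field by \cref{lem:principal-fixed}.
\end{enumerate}

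\textbf{Step 3: non-membership for $M_{33}$.} Both $A_{24}^{(5)}$ and $N$ are restrictions of the parallel connection.
\begin{itemize}
\item $M_{33}\notin\Alg$, because its restriction $A_{24}^{(5)}$ is not algebraic.
\item $M_{33}\notin\FLin$. Any folded-linear representation would restrict to $A_{24}^{(5)}$, which forces characteristic $5$. It would also restrict to $N$, which has no folded-linear representation in characteristic $5$.
\end{itemize}

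\textbf{Main obstacle.} I expect the hardest step to be certifying the two characteristic obstructions for folded-linear representations of every degree $t$: that $A_{24}^{(5)}$ forces characteristic $5$, and that $N$ fails in characteristic $5$. For $A_{24}^{(5)}$ I would first try Mat\'u\v{s}-style partition-degree or coordinatization arguments over the endomorphism ring of a $t$-dimensional block. These should reduce the cyclic $M(5)$ gadget to the identity $5=0$ in that ring. For $N$ I would adapt the Sharififar--Sadeghi--Aboutorab argument from $7$ to $5$.
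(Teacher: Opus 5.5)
The central gap is in Step~2: you never exhibit the matroid $N$, and the source you point to does not supply it. The generalized non-Fano matroid of Sharififar--Sadeghi--Aboutorab at the prime $5$ is $NnF(5)=B_5$, represented over $\mathbb Q^6$ by $e_i$, $\mathbf 1-e_i$ $(1\le i\le 6)$ and $\mathbf 1$. It has rank $6$ on $13$ elements, and that construction does not give a rank-three plane on ten elements. Your counts $24+10-1=33$ and $4+3-1=6$ are therefore fitted to the statement rather than derived. Proving that an unspecified ten-element plane has no $t$-linear representation in characteristic $5$ for every $t$ would also not be a routine adaptation. For $B_5$ itself no adaptation is needed: their Proposition~7 covers finite fields of characteristic $5$, and specializing the finitely many block entries to a finite field extends this to every field of characteristic $5$.

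With the actual $B_5$, your one-point parallel connection has $36$ elements and rank $9$. The missing idea is to weld along four points instead of one. Identify a basis $I_A$ of $A_{24}^{(5)}$ with the independent but non-spanning set $I_B=(a_1,a_2,a_3,a_4)$ of $B_5$, by four successive principal-extension/contraction/deletion steps. Then use $r(S)=\min_{J\subseteq[4]}\bigl(r_A(S_A\cup I_A(J))+r_B(S_B\cup I_B(J))-|J|\bigr)$ to show that both factors survive as restrictions; this is where independence of $I_A$ and of $I_B$ enters. Since the $B_5$-restriction is spanning, you get $24+13-4=33$ elements and rank $6$. Your Steps~2(iii)--(iv) and~3 then go through unchanged, applying \cref{lem:principal-fixed} at each of the four identifications.

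A second problem is your argument that $A_{24}^{(5)}\notin\Alg$. There is no general theorem turning an algebraic representation in characteristic $p$ into a linear representation over a skew field of $p$-polynomials. For example, $F_7\oplus F_7^-$ is algebraic in characteristic $2$ but is linear over no division ring.

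Ben-Efraim's argument, as used in the paper, is a Frobenius/derivation argument giving a linear shadow over a commutative field. It needs the non-Fano seed to be non-degenerate, which fails in characteristic $2$ because the Fano completion is linear there. It also ends with $K_1K_2=1$ against $K_1K_2=-1$, which is no contradiction in characteristic $2$. So the quaternion gadget alone does not exclude every positive characteristic.

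The paper instead lets the $M(5)$ gadget do double duty. Gordon's theorem confines any algebraic representation of $A_{24}^{(5)}$ to characteristic $5$, and there the computation for the $15$-element core $C_{15}$ gives the contradiction. That step, and the folded-linear characteristic forcing, require both gadgets to survive as minors, which your Step~1 leaves unspecified. The paper builds $A_{24}^{(5)}$ from $C_{15}$ and the width-two lift of $R_5$. It identifies the packets $p_1\leftrightarrow A$ and $p_2\leftrightarrow X$ via the matrix $T$ and deletes the common elements, giving $15+13-4=24$ elements of rank $4$. It then recovers $C_{15}$ and $R_5$ as minors by contracting $R{:}S_0$ and $C{:}O$, respectively.
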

\begin{proof}
We give the construction in four steps.  The finite rank assertions used below are checked by the exact verifier in the computational companion.  The characteristic obstructions are established by the mathematical arguments in the proof.

\emph{A characteristic-$5$ core.}
We define the $15$-element core directly.  Put $W=\mathbb F_5^2$ and let $I$ denote its identity matrix.  In $W^3$, let $p_1,p_2,p_3$ be the images of $v\mapsto(v,0,0)$, $v\mapsto(0,v,0)$, and $v\mapsto(0,0,v)$, respectively, and let $O$ be the image of $v\mapsto(v,v,v)$.  Set $\rho(-e)=-I$ and
\[
\rho(i)=\begin{pmatrix}0&1\\4&0\end{pmatrix},\qquad
\rho(j)=\begin{pmatrix}0&2\\2&0\end{pmatrix},\qquad
\rho(k)=\begin{pmatrix}2&0\\0&3\end{pmatrix}.
\]
For $g\in\{-e,i,j,k\}$, write $g^1,g^2,g^3$ for the images of
\[
v\longmapsto(-v,\rho(g)v,0),\qquad
v\longmapsto(0,-v,\rho(g)v),\qquad
v\longmapsto(\rho(g)v,0,-v),
\]
respectively.  Let $C_{15}$ be the $2$-linear matroid on
\[
\{p_1,p_2,p_3,O,-e^1,-e^2,-e^3,
  i^1,i^2,i^3,j^1,j^2,j^3,k^1,k^2\}.
\]
This is the indicated restriction of Ben-Efraim's $Q^{\mathrm{NF}}_3(\mathsf Q_8)$~\cite{BenEfraim2016}.  The computational companion also constructs an explicit width-two model of the same core over $\mathbb F_7$.  An exhaustive comparison of all $2^{15}$ subsets shows that every block-span dimension is even and that the $\mathbb F_5$ and $\mathbb F_7$ models have the same span dimension on every subset.

We claim that $C_{15}$ is not algebraic in characteristic $5$.  Suppose otherwise.  By the Piff--Lindstr\"om reduction, we may work over the prime field~\cite[Theorem~2.3]{BenEfraim2016}.  The Frobenius/derivation preparation in~\cite[Lemma~4.3 and Claims~4.4--4.6]{BenEfraim2016} localizes to this restriction: Claim~4.4 uses only the seven-point non-Fano seed, and Claims~4.5--4.6 adjust one further edge element at a time.  Thus, after individual Frobenius replacements, all selected elements are separable over a basis and have nonzero gradients.  Original circuits remain dependent in the derivative shadow.  Claim~4.4 also makes the seed shadow simple of rank three: every pair of seed gradients is independent, and every triple among $p_1,p_2,p_3,O$ is independent.  The six non-Fano lines survive.  The only further line compatible with these incidences would complete the Fano plane, which is not linearly representable in characteristic $5$.  Hence no degeneration occurs, and the seed gradients have the standard non-Fano normalization
\[
\begin{aligned}
p_1&=(1,0,0),&p_2&=(0,1,0),&p_3&=(0,0,1),\\
-e^1&=(1,1,0),&-e^2&=(0,1,1),&-e^3&=(1,0,1),&O&=(1,1,1).
\end{aligned}
\]
Write three nonzero edge vectors as
\[
U_1=(x_1,y_1,0),\quad U_2=(0,y_2,z_2),\quad U_3=(x_3,0,z_3).
\]
Dependence of $(-e^1,U_2,U_3)$, $(U_1,-e^2,U_3)$ and $(U_1,U_2,-e^3)$ gives
\[
x_3z_2+y_2z_3=x_1z_3+x_3y_1=x_1y_2+y_1z_2=0.
\]
None of the six displayed coordinates can vanish.  For example, if $x_1=0$, then $x_1y_2+y_1z_2=0$ gives $z_2=0$, and $x_1z_3+x_3y_1=0$ gives $x_3=0$; the first equation then gives $y_2z_3=0$, contradicting the nonzeroness of $U_2$ and $U_3$.  The other five cases are symmetric.  Projectively normalize to
$U_1=(1,a,0)$, $U_2=(0,1,b)$ and $U_3=(c,0,1)$.  Then $ab=ac=bc=-1$, so $a=b=c$ and $a^2=-1$.  Applying this to the $i$- and $j$-triangles gives parameters $\alpha,\beta$ with $\alpha^2=\beta^2=-1$.

Use the normalizations
\[
i^1=(1,\alpha,0),\quad i^2=(0,1,\alpha),\quad i^3=(\alpha,0,1),
\]
\[
j^1=(1,\beta,0),\quad j^2=(0,1,\beta),\quad j^3=(\beta,0,1).
\]
The edge vectors $k^1,k^2$ have both displayed coordinates nonzero by the same vanishing argument, so write
\[
k^1=(1,K_1,0),\qquad k^2=(0,1,K_2).
\]
Taking determinants for the remaining circuits $(j^1,k^2,i^3)$, $(k^1,i^2,j^3)$ and $(k^1,k^2,-e^3)$ gives
\[
\alpha\beta K_2=-1,\qquad K_1\alpha\beta=-1,\qquad K_1K_2=-1.
\]
The first two equations imply $K_1=K_2=-1/(\alpha\beta)$, hence $K_1K_2=1/(\alpha^2\beta^2)=1$, contradicting the third equation in characteristic $5$.  Thus
\begin{equation}\label{eq:c15-nonalg}
C_{15}\notin\Alg\quad\text{in characteristic }5.
\end{equation}

\emph{The $24$-element component.}
Take the preceding $C_{15}$ representation and the scalar width-$2$ lift of the characteristic-$5$ Reid geometry $R_5=M(5)$.  Identify the packets $p_1\leftrightarrow A$ and $p_2\leftrightarrow X$, using on the second packet the change of basis
\[
T=\begin{pmatrix}1&2\\1&3\end{pmatrix},
\]
and then delete the two common ground elements.  The matrix $T$ specifies the isomorphism between the two-dimensional packets being identified.  Although a change of basis does not alter either packet separately, the chosen inter-packet identification affects the quotient arrangement; this choice makes the two recovery minors below survive.  The resulting packet arrangement $A_{24}^{(5)}$ lies in an $8$-dimensional vector space over $\mathbb F_5$, and the superscript records the characteristic.  The exact verifier enumerates the $783$ distinct packet spans reached by all $2^{24}$ subsets.  Equal spans are merged because every tested rank depends only on the resulting span; the enumeration is therefore exhaustive.  Every scalar rank is even and the total scalar rank is $8$.  Hence $A_{24}^{(5)}$ is $2$-linear of matroid rank $4$.

Write $C{:}x$ and $R{:}y$ for elements inherited from the $C_{15}$ and $R_5$ sides, respectively.  The same calculation recovers both factors as minors.  For $C_{15}$, contract $R:S_0$; the elements $R:S_1$ and $R:R_0$ replace the two deleted common points $p_1,p_2$.  For $R_5$, contract $C:O$; the elements $C:-e^2$ and $C:-e^3$ replace $A,X$.  If $A_{24}^{(5)}$ were algebraic, its $R_5$ minor would force characteristic $5$ by Gordon's Reid-geometry theorem~\cite{Gordon1988}, whereas its $C_{15}$ minor would contradict \eqref{eq:c15-nonalg}.  Therefore
\[
A_{24}^{(5)}\in\FLin\setminus\Alg.
\]

\emph{The generalized non-Fano component.}
Let $e_1,\ldots,e_6$ be the standard basis of $\mathbb Q^6$ and put $\mathbf 1=e_1+\cdots+e_6$.  Define $B_5$ to be the rank-six rational matroid represented by
\[
a_i=e_i,\qquad b_i=\mathbf 1-e_i\quad(1\le i\le6),
\qquad c=\mathbf 1.
\]
This is the matroid $NnF(5)$ of Sharififar--Sadeghi--Aboutorab~\cite[Definition~21]{SharififarEtAl2024}.  In particular, $|E(B_5)|=13$, $r(B_5)=6$, and $(a_1,a_2,a_3,a_4)$ is independent.  Over characteristic $5$ it has the monomial realization
\[
a_i=x_i,\qquad b_i=\prod_{j\ne i}x_j,
\qquad c=\prod_{j=1}^6x_j,
\]
with the $x_i$ algebraically independent.  By \cref{lem:monomial}, rational exponent rank gives its matroid rank, so $B_5$ is algebraic.

Sharififar--Sadeghi--Aboutorab prove that $B_5$ has no $t$-linear representation over a finite field of characteristic $5$, for any $t$~\cite[Proposition~7]{SharififarEtAl2024}.  This implies the same statement over every field of characteristic $5$.  Indeed, a block representation over such a field uses finitely many matrix entries.  Adjoin those entries to $\mathbb F_5$ and invert one nonzero minor witnessing each required rank.  The resulting finitely generated $\mathbb F_5$-algebra has a maximal-ideal quotient that is a finite field.  All minors required to vanish still vanish, while the chosen rank minors remain nonzero.  Applying the singleton rank conditions ensures that every packet retains dimension $t$ after reduction.  We therefore obtain a block representation of the same degree over a finite field, contradicting the cited proposition.  Hence
\begin{equation}\label{eq:nnf5-noflin}
B_5\text{ has no $t$-linear representation for any }t\ge1.
\end{equation}

\emph{Four-point independent-set welding.}
In $A_{24}^{(5)}$ and $B_5$, respectively, take the ordered sets
\[
I_A=(C:p_3,C:{-}e^1,C:{-}e^2,R:S_0),
\qquad
I_B=(a_1,a_2,a_3,a_4).
\]
The exact rank calculation gives $r_A(I_A)=r_B(I_B)=4$.  Thus $I_A$ is a basis of the rank-four matroid $A_{24}^{(5)}$, whereas $I_B$ is only an independent set in the rank-six matroid $B_5$.  To identify a pair of independent points $x,y$, add a point freely to $\cl(x,y)$, contract the new point, delete $y$, and retain $x$ as the common label.  Starting from the direct sum, perform this operation on the four pairs successively, and retain the $A$-side labels.  Denote the resulting matroid by $M_{33}$.  In the deposited rank oracle, elements from $A_{24}^{(5)}$ have labels of the form \texttt{A:...}, the nine unshared elements from $B_5$ have labels of the form \texttt{B:...}, and each identified element appears once under its retained \texttt{A:...} label.  For $J\subseteq[4]$, let $I_A(J)$ and $I_B(J)$ be the corresponding subtuples.  If $S_A,S_B$ are the preimages of $S$ on the two factor ground sets, induction over the four identifications gives
\begin{equation}\label{eq:m33-rank}
r_{M_{33}}(S)=
\min_{J\subseteq[4]}
\bigl(r_A(S_A\cup I_A(J))+r_B(S_B\cup I_B(J))-|J|\bigr).
\end{equation}
Both factors survive as natural restrictions.  For a set contained in the $A$-factor, independence of $I_B$ contributes $|J|$ to every term of \eqref{eq:m33-rank}; thus all terms are at least its $A$-rank, and $J=\varnothing$ gives equality.  For a set $T$ in the $B$-factor, let $I$ index the selected elements contained in $T$.  In the term indexed by $J$, independence of $I_A$ contributes $|I\cup J|$, while removing the elements indexed by $I\setminus J$ lowers $r_B(T)$ by at most $|I\setminus J|$.  Hence every term is at least
\[
|I\cup J|+r_B(T)-|I\setminus J|-|J|=r_B(T),
\]
and equality is attained at $J=I$.  The $B_5$ restriction is spanning, and hence
\[
|E(M_{33})|=24+13-4=33,
\qquad r(M_{33})=6.
\]
The four elements of $I_B$ are independent but do not form a basis of the rank-six matroid $B_5$; thus the construction is a four-point independent-set welding, not a welding of two bases.

The matroid $A_{24}^{(5)}$ is $2$-linear, and therefore $2$-fold algebraic, over $\mathbb F_5$.  Taking two algebraically independent copies of the monomial realization makes $B_5$ $2$-fold algebraic over the same field.  Choose the two scalar lifts algebraically independent over $\mathbb F_5$ before forming their direct sum.  \Cref{lem:principal-fixed} applies at each principal extension, contraction, and deletion, and gives
\[
M_{33}\in\FAlg
\]
over $\mathbb F_5$ with folding degree $2$.

Since $A_{24}^{(5)}$ is a restriction, $M_{33}$ cannot be algebraic.  If $M_{33}$ were $t$-linear over a field $K$, its $A_{24}^{(5)}$ restriction, and then its $R_5$ minor, would force $\operatorname{char}K=5$ by the arbitrary-width Reid theorem~\cite[Theorem~3.1]{BenEfraim2016}.  Its $B_5$ restriction would then contradict \eqref{eq:nnf5-noflin}.  Hence $M_{33}\notin\FLin$.

The companion verifier evaluates \eqref{eq:m33-rank} on all $783$ reachable packet-span states of $A_{24}^{(5)}$ and all $2^{13}=8192$ subsets of $B_5$.  It checks both factor restrictions, the displayed size and rank, and simplicity on all $528$ pairs.  This completes the proof.
\end{proof}

\subsection{Computational certificates}\label{subsec:certificates}

The exact certificates for $M^{\mathrm{FN}}_{10}$, $A_{24}^{(5)}$, $M_{33}$, and $C_{2,3}$ are contained in the computational companion archived at Zenodo, version~1.1.0, DOI~\href{https://doi.org/10.5281/zenodo.22178786}{10.5281/zenodo.22178786}~\cite{KhazaeiCompanion2026}.  The $M^{\mathrm{FN}}_{10}$ verifier uses Python~3 and SymPy~1.14.0 for exact arithmetic over $\mathbb F_2$ and exact Gr\"obner elimination.  The other verifiers use Python~3 standard-library rational and finite-field arithmetic.  The archive contains the input matrices and exponent vectors, rank oracles for $M_{33}$ and $C_{2,3}$, canonical JSON certificates, run scripts, and a SHA-256 manifest of every distributed file.  The proofs of \cref{prop:mfn10,prop:m33,thm:C23,cor:dealer} identify the finite assertions checked by each certificate.

\section{Folding-stable characteristic rigidity}\label{sec:proof}

We prove \cref{thm:intro-singleton}.  Fix a prime $p$ and suppose $M(p)$ has a $t$-fold algebraic representation over a field $K$.  Passing to an algebraic closure and using \cref{lem:blocklift}, we may work in one sufficiently saturated algebraically closed extension.

\subsection{Full embeddings and the group input}

Write $\acl_F(S)$ for field-theoretic algebraic closure over an algebraically closed base $F$, and write $x\simF y$ when $\acl_F(x)=\acl_F(y)$.  Tuples satisfying this relation are \emph{interalgebraic over $F$}.

We use standard notation from the model theory of algebraically closed fields.  The abbreviation ACF denotes the first-order theory of algebraically closed fields; $\mathbb U$ is a sufficiently saturated algebraically closed field, and $F_0\preceq\mathbb U$ means that $F_0$ is an elementary subfield.  The relation $F\ind_{F_0}D$ means algebraic independence over $F_0$.  An $F$-definable set is one given by first-order field conditions with parameters in $F$; its dimension is transcendence degree over $F$.  A generic tuple has dimension equal to that of the definable set.  A definable group is \emph{connected} if it has no proper definable subgroup of finite index.

\begin{lemma}[Full-embedding dictionary]\label{lem:full}
For a simple matroid with a $t$-fold algebraic representation over an algebraically closed base $F$, every point has dimension $t$, every pair of distinct points has dimension $2t$, a collinear triple has dimension $2t$ and each point lies in the algebraic closure of the other two, while a noncollinear triple has dimension $3t$ and is independent.
\end{lemma}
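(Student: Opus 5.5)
The dictionary follows by reading the defining identity \eqref{eq:falg} on sets of size at most three, then translating the numbers into statements about algebraic closure over $F$. I will assume, as the statement does, that the representation is over the algebraically closed base $F$, so that
\[
\trdeg_F F(X_A)=t\,r_M(A)
\]
for every $A\subseteq E$.

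\emph{Ranks of small sets.} Since $M$ is simple, every singleton has rank one and every pair of distinct points has rank two. This gives $\dim X_e=t$ and $\dim X_eX_f=2t$. In the rank-three matroids considered here, a triple is collinear exactly when it has rank two, giving dimension $2t$; otherwise it has rank three, giving dimension $3t$.

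\emph{Collinear triples.} Let $\{e,f,g\}$ be a collinear triple. Since $\{e,f\}$ has rank two, $\trdeg_F F(X_e,X_f)=2t=\trdeg_F F(X_e,X_f,X_g)$. Hence $F(X_e,X_f,X_g)$ is algebraic over $F(X_e,X_f)$, which says $X_g\subseteq\acl_F(X_eX_f)$. The same argument applies to every pair in the triple, because all three pairs are distinct and so have rank two. Thus each point lies in the algebraic closure of the other two.

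\emph{Noncollinear triples.} Here $\dim X_eX_fX_g=3t=\dim X_e+\dim X_f+\dim X_g$. Additivity of transcendence degree then shows that the three packets are algebraically independent over $F$. Concretely, $X_g\ind_F X_eX_f$, and the same holds for every other choice of the singled-out packet. Equivalently, $\dim(X_g/FX_eX_f)=t=\dim(X_g/F)$.

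The only point needing care is the base field. If the original representation is over $K$, I would first replace $K$ by its algebraic closure inside $\mathbb U$. Transcendence degree over $K$ equals transcendence degree over $\overline K$, since $\overline K$ is algebraic over $K$, so the ranks are unchanged. This is the same reduction already made at the start of \cref{sec:proof}, and beyond it there is no real obstacle.
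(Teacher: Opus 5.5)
Your proposal is correct and matches the paper's proof: both read the rank identities \eqref{eq:falg} on sets of at most three points, and convert the equality $\trdeg_F(x,y,z)=2t=\trdeg_F(x,y)$ for a collinear triple into algebraic closure, cyclically in the three points. Your extra details are harmless and accurate: additivity of transcendence degree for noncollinear triples, and the passage from $K$ to an algebraically closed base.
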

\begin{proof}
These are exactly the rank identities \eqref{eq:falg}.  For a collinear triple $x,y,z$, for example,
\[
\trdeg_F(x,y,z)=2t=\trdeg_F(x,y),
\]
so $z\in\acl_F(x,y)$, and similarly cyclically.
\end{proof}

Choose a field $\mathbb U$ as above that contains the representation over an algebraically closed $F_0$.  Since ACF is model complete, regard $F_0\preceq\mathbb U$.  For the finitely many cells, the full group-configuration theorem may enlarge the base by parameter sets $B_0,\ldots,B_{p-1}$ independent from the corresponding configurations.  By extension and homogeneity, choose these parameter sets jointly independent from the finite tuple $D$ of all embedded gadget coordinates over $F_0$, and then choose a sufficiently saturated algebraically closed elementary submodel $F\supseteq F_0B_0\cdots B_{p-1}$ with
\[
F\ind_{F_0}D.
\]
Then every subtuple of $D$ has the same transcendence degree over $F$ as over $F_0$, so all rank, closure, and genericity data are preserved.  All auxiliary parameter sets are contained in this single base, and we perform every cell recovery over $F$.

The proof uses two group-configuration theorems.  The statements needed here are characteristic-free: Bays--Breuillard explicitly work with arbitrary algebraically closed fields in their group-configuration paragraph~\cite[Section~3.0.1]{BaysBreuillard2021}.  Their characteristic-zero restrictions are confined to the separate incidence estimate and converse construction in their Lemma~2.15 and Proposition~7.10.  We use the full group-configuration theorem in its group-plus-homogeneous-space form, stated explicitly in the corrected online version of~\cite[Theorem~6.1]{BaysGST2018}; that exposition follows~\cite[Theorem~5.4.5 and Remark~5.4.10]{Pillay1996}.  The corrected seven-point theorem independently recovers a connected commutative algebraic group~\cite[Theorem~3.3]{BaysBreuillard2021}.  The latter statement explicitly permits the exceptional triple $(W,C,Y)$ that occurs when $p=2$; its commutativity input is due to Bays--Hils--Moosa~\cite[Theorem~C.1]{BaysHilsMoosa2017}.

\begin{theorem}[Seven-point recovery]\label{thm:recovery}
Suppose seven $t$-dimensional points $A,B,C,W,X,Y,Z$ satisfy the six lines \eqref{eq:sixlines}, every other triple being independent except possibly $(W,C,Y)$.  Then there is a connected commutative $F$-definable group $G$ of dimension $t$ and representatives
\[
a,b,c,u,y,z\in G
\]
interalgebraic with $A,B,C,X,Y,Z$, respectively, such that $a,b,u$ are independent generics and
\[
c=a+b,\qquad y=a+u,\qquad z=a+b+u
\]
are literal group equalities.
\end{theorem}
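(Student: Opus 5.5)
The plan is to derive \cref{thm:recovery} from the cited group-configuration theorems rather than build the group by hand. The first step is to turn the seven points with their six lines into a group configuration in the sense of~\cite[Theorem~3.3]{BaysBreuillard2021}. For this I use \cref{lem:full}. Every point has dimension $t$, and every pair of distinct points has dimension $2t$. Each of the six lines $ABC$, $AXY$, $BYZ$, $CXZ$, $AWZ$, $BWX$ is a collinear triple, so each of its points lies in the algebraic closure of the other two. Every other triple has dimension $3t$, apart possibly from $(W,C,Y)$, which that theorem explicitly allows. These are exactly the dimension and algebraicity hypotheses of the abelian seven-point configuration. All data are computed over the base $F$ built above, and since $F\ind_{F_0}D$ the transcendence degrees agree with those over $F_0$.

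The second step applies the corrected seven-point theorem of Bays--Breuillard~\cite[Theorem~3.3]{BaysBreuillard2021}, with the commutativity input of Bays--Hils--Moosa~\cite[Theorem~C.1]{BaysHilsMoosa2017}. It gives a connected commutative $F$-definable group $G$ and tuples in $G$ interalgebraic over $F$ with the configuration points. In the commutative case the conclusion has the form $a,b,u$ independent generics, with the remaining points interalgebraic with $a+b$, $a+u$, $a+b+u$ (and $W$ with $b+u$ or $u$, depending on labeling). I then fix representatives $c:=a+b$, $y:=a+u$, $z:=a+b+u$. With these choices the equalities hold literally, and interalgebraicity with $C,Y,Z$ is what the theorem supplies. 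The dimension of $G$ equals the dimension of a generic, and $a\simF A$ gives $\dim G=\trdeg_F(a)=t$.

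The main obstacle is matching conventions. The cited theorem may label the configuration differently, for example with the group acting on a homogeneous space and $X$ playing the role of the space point. So I have to check that our roles $(A,B,C,W,X,Y,Z)$ map onto theirs so that $A,B,C$ are the group-type line and $X,Y,Z$ the translated line. If the theorem returns the relations only up to interalgebraicity, or up to finite-index and isogeny changes, I would first use the group-plus-homogeneous-space form~\cite[Theorem~6.1]{BaysGST2018} to get $G$ acting on $X$. Since $G$ is commutative, I then identify the homogeneous space with $G$ by choosing the base point generically over $F$, or by absorbing it into $F$ since $F$ is saturated and a model. After that, I replace the representatives by the group elements above, which is legitimate because each replacement preserves the $\simF$-class. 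Connectedness is obtained by passing to the connected component $G^0$, which has finite index, and translating the generics into it; this does not change algebraic closures over $F$.
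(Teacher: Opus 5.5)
Your proposal has a genuine gap at the step that puts all six points into one commutative group. As the paper uses it, the corrected seven-point theorem \cite[Theorem~3.3]{BaysBreuillard2021} gives a connected commutative group $G'$ and a group triple interalgebraic with the single line $(A,B,C)$. It does not place $X,Y,Z$ in $G'$. So your Step~2 reading, with $X\simF u$, $Y\simF a+u$, $Z\simF a+b+u$, is not what is being cited, and the real work lies in your fallback.

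In the fallback, the group-plus-homogeneous-space theorem gives a connected group $\widetilde G$ acting faithfully and transitively on a space $S$. There $(A,B,C)$ is a group triple and $X,Y,Z$ are space points, but nothing makes $\widetilde G$ commutative. Your sentence ``Since $G$ is commutative, I then identify the homogeneous space with $G$'' silently treats $\widetilde G$ and $G'$ as the same group.

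The missing idea is a comparison of the two groups through their common triple $(A,B,C)$. The paper applies the Boege--Yashfe theorem (\cref{thm:compare}) to the two interalgebraic group triples and obtains a quasi-isomorphism. It then quotients by the finite kernel and target indeterminacy as in \eqref{eq:qiso-quotient}, and quotients $S$ by the corresponding finite action. The acting group is then commutative, the coordinates stay interalgebraic, and the action identities stay literal. Once you have the quasi-isomorphism, you could also argue directly: $\widetilde G$ modulo a finite normal subgroup is commutative, so its conjugacy classes are finite, and connectedness makes $\widetilde G$ itself commutative.

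Two further steps need care.

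First, commutativity alone does not let you identify $S$ with the group. You need a commutative group acting transitively and faithfully. Then every stabilizer equals the kernel of the action, so the action is regular. This is also why any finite quotient of the group must be matched by a quotient of $S$, to keep faithfulness.

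Second, of your two choices of base point only the second is valid. A base point generic over $F$ makes the identification of $S$ with the group definable only over $F$ together with that point. Then $u$ would be interalgebraic with $X$ over the enlarged base, not over $F$, which the statement and its later use across all cells over one fixed $F$ require. You must take an $F$-rational point. It exists because $S$ is $F$-definable and nonempty and $F$ is algebraically closed. This is the paper's final step, after which $c=a+b$, $y=a+u$, $z=a+b+u$ and $\dim G=t$ follow as you say.
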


Apply the full group-configuration theorem to the six-point configuration inside the given seven-point cell.  It first yields a connected faithful $\bigvee$-definable homogeneous space $(\widetilde G,S)$ over the enlarged base and a group configuration interalgebraic with $(A,B,C,X,Y,Z)$, with the corresponding group and action identities.  In ACF, the algebraic reformulation immediately following the cited theorem replaces this generic action by a birational action of a connected algebraic group on an irreducible variety, through generically finite correspondences.  Thus, after interalgebraic replacement, the generic group and action coordinates may be taken in connected $F$-definable objects.

Separately, the seven-point theorem recovers a connected commutative $F$-definable algebraic group $G'$ and a group triple interalgebraic with $(A,B,C)$.  Apply the comparison theorem stated below as \cref{thm:compare} to this triple and the group triple in $\widetilde G$.  After quotienting the finite kernels and target indeterminacy as in \eqref{eq:qiso-quotient}, and quotienting the homogeneous space by the corresponding finite action, the generic coordinates remain interalgebraic and the action identities remain literal.  The resulting connected commutative group acts transitively and faithfully.  For a commutative transitive action all point stabilizers coincide with the action kernel, so faithfulness makes the action regular; the homogeneous space is therefore principal.  It is $F$-definable and nonempty, hence has an $F$-point because $F$ is algebraically closed.  Choosing that point as origin identifies the homogeneous space with the group and converts the action identities into
\[
c=a+b,\qquad y=a+u,\qquad z=a+b+u.
\]
Finally, $a$ is generic and interalgebraic with the $t$-dimensional point $A$, so the resulting group has dimension $t$.

The missing seventh vertex is forced as well.

\begin{lemma}[Cube completion]\label{lem:cube}
With the notation of \cref{thm:recovery},
\[
W\simF b+u.
\]
\end{lemma}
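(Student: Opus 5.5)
We need $W\simF b+u$, using the two lines through $W$: $AWZ$ and $BWX$. From $BWX$, $W\in\acl_F(B,X)=\acl_F(b,u)$. From $AWZ$, $W\in\acl_F(A,Z)=\acl_F(a,a+b+u)$. The plan is to show that the intersection $\acl_F(b,u)\cap\acl_F(a,a+b+u)$ is, up to interalgebraicity, exactly $\acl_F(b+u)$, and then to pin $W$ down by a dimension count.

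The key steps, in order, are as follows.

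1. By \cref{lem:full}, $W$ has dimension $t$, $W\in\acl_F(b,u)$, and $W\in\acl_F(a,a+b+u)$, where $a,b,u$ are independent generics of the $t$-dimensional group $G$.

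2. Put $w=b+u$. Then $w$ is generic and independent from $a$, and $\acl_F(a,a+w)=\acl_F(a,w)$.

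3. Condition on $w$. Over $Fw$, the element $a$ is generic and independent from $b$, since $(a,b,u)$ and $(a,b,w)$ are interdefinable and $a,b,w$ are independent. Hence $\acl_F(b,u)=\acl_F(b,w)$ and $\acl_F(a,w)$ are algebraically independent over $\acl_F(w)$.

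4. Since $W$ lies in the algebraic closures of both $Fwb$ and $Fwa$, and $a\ind_{Fw}b$, we get $W\ind_{Fw}W$. So $W$ is algebraic over $Fw$, that is, $W\in\acl_F(w)$.

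5. Both $W$ and $w$ have dimension $t$, so $\trdeg_F(w/W)=\trdeg_F(w,W)-\trdeg_F(W)=t-t=0$. This gives $w\in\acl_F(W)$, and therefore $W\simF b+u$.

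The one point needing care is step 3: the algebraic independence of $a$ and $b$ over $Fw$. This follows from additivity of transcendence degree. We have $\trdeg_F(a,b,w)=3t$ and $\trdeg_F(w)=t$, so $\trdeg(a,b/Fw)=2t$, which splits as $t+t$. Genericity of $a$ and $u$ also ensures $b+u$ is a generic of the connected group $G$, so $\dim w=t$. The remaining steps are standard forking calculus in ACF, where independence means algebraic independence, and I expect no real obstacle.
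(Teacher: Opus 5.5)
Your proof is correct and is essentially the paper's argument. Both place $W$ in $\acl_F(b,u)\cap\acl_F(a,a+b+u)$ via the lines $BWX$ and $AWZ$, and both use $\trdeg_F(a,b,u)=3t$ to show this intersection is only $t$-dimensional and contains $b+u$; the paper bounds the intersection by submodularity ($2t+2t-3t=t$), whereas you get the same bound from the independence $a\ind_{F(b+u)}b$, which amounts to the same thing.
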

\begin{proof}
Let $w$ be any representative of the embedded vertex $W$, and put
\[
P=\acl_F(b,u),\qquad Q=\acl_F(a,a+b+u).
\]
Each has dimension $2t$ and their join has dimension $3t$, so $\dim(P\cap Q)\le t$.  The lines $BWX$ and $AWZ$ put $w$ in both closures, and $w$ has dimension $t$; hence $P\cap Q=\acl_F(w)$.  The element $b+u$ also has dimension $t$ and belongs to both closures, so $w\simF b+u$.
\end{proof}

Second, we use the comparison theorem of Boege--Yashfe~\cite[Theorem~3.14]{BoegeYashfe2026}.  A \emph{group triple} is a triple $(x,y,x+y)$ of generic elements with the expected pairwise dimensions.  A \emph{quasi-epimorphism} from a connected $F$-definable group $G$ to a connected $F$-definable group $H$ is an $F$-definable subgroup $\Phi\le G\times H$ such that both coordinate projections are surjective and the kernel of the projection $\Phi\to G$ is finite; it is a \emph{quasi-isomorphism} if the kernel of the projection $\Phi\to H$ is finite as well.

\begin{theorem}[Comparison of group triples]\label{thm:compare}
Let $(x,y,z)$ and $(x',y',z')$ be group triples in connected $F$-definable groups $G$ and $G'$.  Suppose
\[
x'\in\acl_F(x),\qquad y'\in\acl_F(y),\qquad z'\in\acl_F(z).
\]
Then there are $\alpha,\beta\in G'(F)$ and an $F$-definable quasi-epimorphism $\Phi\le G\times G'$ such that, in additive notation,
\begin{equation}\label{eq:explicit-compare}
 x\,\Phi\,(\alpha+x'),\qquad
 y\,\Phi\,(y'+\beta),\qquad
 z\,\Phi\,(\alpha+z'+\beta).
\end{equation}
If the two triples are elementwise interalgebraic over $F$, then $\Phi$ is a quasi-isomorphism.
\end{theorem}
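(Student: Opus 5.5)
The approach is Hrushovski's stabilizer argument applied to the three joint types in $G\times G'$. Put
\[
\hat x=(x,x'),\qquad \hat y=(y,y'),\qquad \hat z=(z,z')\in G\times G'.
\]
Since $(x,y,z)$ and $(x',y',z')$ are group triples, $\hat z=\hat x+\hat y$ literally in $G\times G'$. Since $x'\in\acl_F(x)$, we have $\dim\hat x=\dim x$, and similarly for $\hat y$ and $\hat z$. Write $n=\dim G$. Generic triples give $\dim x=\dim y=\dim z=n$. Moreover $x\ind_F y$, so $\hat x\ind_F\hat y$.

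The first step is the standard lemma that if $\hat x\ind_F\hat y$ and
\[
\dim(\hat x+\hat y)=\dim\hat x=\dim\hat y=n,
\]
then the stabilizer of $p=\operatorname{tp}(\hat x/F)$ has dimension $n$. Its connected component $\Phi$ is $F$-definable. Each of $\hat x,\hat y,\hat z$ is a generic of a coset of $\Phi$, and these cosets are $F$-definable. The proof sketch: since $\hat z\ind_F\hat x$ and $\hat z-\hat x=\hat y$, the translates of $\operatorname{tp}(\hat x/F)$ by generics of $\operatorname{tp}(\hat y/F)$ form a family of the same dimension $n$. A dimension count shows that $\hat y_1-\hat y_2$ lies in $\operatorname{Stab}(p)$ for independent realizations $\hat y_1,\hat y_2$, so $\dim\operatorname{Stab}(p)\ge n$. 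Conversely $\dim\operatorname{Stab}(p)\le\dim p=n$. Then $p$ is the generic type of a right coset of $\Phi$, and similarly for the types of $\hat y$ and $\hat z$. Because $F$ is algebraically closed and $\Phi$ has finite index in $\operatorname{Stab}(p)$, each coset is $\acl(F)=F$-definable. Hence each coset has an $F$-rational representative.

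The second step reads off the quasi-epimorphism. The kernel of $\Phi\to G$ is $\Phi\cap(\{0\}\times G')$. The first projection of $\Phi$ has dimension $n=\dim\Phi$, because the coset containing $\hat x$ projects onto a set containing the generic $x$. Hence this kernel is finite, and by connectedness of $G$ the projection $\Phi\to G$ is surjective. Likewise $x'$ is generic in the connected group $G'$, so the image of $\Phi$ in $G'$ contains a generic and is all of $G'$.

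The coset containing $\hat x$ therefore meets $G\times\{\ast\}$ in every fibre and can be written $(0,-\alpha)+\Phi$ with $\alpha\in G'(F)$. This gives $x\,\Phi\,(\alpha+x')$. In the same way $\hat y\in(0,-\beta)+\Phi$ with $\beta\in G'(F)$, which gives $y\,\Phi\,(y'+\beta)$. Adding these and using commutativity of the cosets yields $z\,\Phi\,(\alpha+z'+\beta)$, which is \eqref{eq:explicit-compare}.

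If the triples are elementwise interalgebraic, then $\dim G'=n$ as well. The symmetric count then shows that $\Phi\cap(G\times\{0\})$ is finite, so $\Phi$ is a quasi-isomorphism.

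The main obstacle is the first step: proving that $\operatorname{Stab}(p)$ has full dimension $n$, and that the three types are cosets of one and the same connected subgroup. This must be done without assuming $G$ or $G'$ commutative. There one must use right versus left cosets carefully, and in the general case the constants $\alpha$ and $\beta$ appear on different sides, as in the statement. I would follow the classical Zilber–Hrushovski argument in ACF, where dimension is transcendence degree and generic types are stationary over the algebraically closed base $F$.
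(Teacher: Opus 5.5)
The paper gives no proof of this statement. It is imported from Boege--Yashfe~\cite[Theorem~3.14]{BoegeYashfe2026} and used only through the finite-quotient reformulation \eqref{eq:qiso-quotient}. Your proposal therefore supplies an argument the paper does not contain, and it is correct in outline. Hrushovski's stabilizer lemma, applied to the graph types $\operatorname{tp}(\hat x/F),\operatorname{tp}(\hat y/F),\operatorname{tp}(\hat z/F)$ in $G\times G'$, gives a connected $F$-definable $\Phi$ of dimension $n$. The dimension count on the two projections shows that $\Phi$ is a quasi-epimorphism, and $F$-points in $F$-definable cosets give $\alpha,\beta$. The quasi-isomorphism clause needs only $\dim G'=n$, which already follows from $x\in\acl_F(x')$. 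Compared with the citation, your route shows that the result is elementary in ACF: it uses only stationarity over an algebraically closed base, the bound $\dim\operatorname{Stab}(p)\le\dim p$, and definability of connected components, and it holds for noncommutative groups. The citation buys brevity and the ready-made quotient formalism the paper needs next.

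Two steps should be tightened when you write this out. First, the step you call a dimension count is really stationarity. Let $\tilde p$ be the global nonforking extension of $p$. Since $\hat z\ind_F\hat y$ and $F$ is algebraically closed, the translate $\tilde p+\hat y$ is the nonforking extension of $\operatorname{tp}(\hat z/F)$ for every realization $\hat y$ of $q=\operatorname{tp}(\hat y/F)$. Hence $\hat y_1-\hat y_2$ lies in the \emph{right} stabilizer $\{g:\tilde p+g=\tilde p\}$, and it has dimension $n$ when $\hat y_1\ind_F\hat y_2$.

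Second, fix the sidedness. Take $\Phi$ to be the connected component of that right stabilizer. Then $p$ is the generic type of a \emph{left} coset $\gamma+\Phi$, while all realizations of $q$ lie in one right coset $\Phi+\delta$. You instead describe all three types as right cosets and then appeal to commutativity, which is not available for general $G,G'$. With the correct sides, $\hat z=\hat x+\hat y\in\gamma+\Phi+\delta$ with no commutativity needed. Normalizing $\gamma=(0,-\alpha)$ and $\delta=(0,-\beta)$, via surjectivity of $\pi_G|_\Phi$ and $F$-points, gives exactly the sidedness in \eqref{eq:explicit-compare}.
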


For a quasi-isomorphism $\Phi\le G\times G'$, let
\[
K_\Phi=\pi_{G'}\bigl(\ker(\pi_G|_\Phi)\bigr)\le G'.
\]
This is a finite $F$-definable subgroup.  The finite-indeterminacy quotient formalism of Boege--Yashfe~\cite[Remark~3.3]{BoegeYashfe2026} identifies $\Phi$ with a surjective $F$-definable homomorphism
\begin{equation}\label{eq:qiso-quotient}
 f_\Phi:G\longrightarrow G'/K_\Phi
\end{equation}
with finite kernel.  The comparison theorem is cited from the Boege--Yashfe preprint; together with \eqref{eq:qiso-quotient}, it supplies the finite-quotient comparison used below.

\subsection{One common quotient from the common anchor}

Recover a group $G_i$ from each cell $Q_i$.  Choose lower-case representatives so that
\begin{equation}\label{eq:cellcoords}
A\simF a_i,\quad S_i\simF b_i,\quad S_{i+1}\simF a_i+b_i,
\end{equation}
\[
X\simF u_i,\quad Y\simF u_i+a_i,\quad
R_i\simF u_i+b_i,\quad R_{i+1}\simF u_i+a_i+b_i.
\]
All cells share the same matroid triple $(X,A,Y)$.  Thus the group triples
\[
(u_i,a_i,u_i+a_i)\subseteq G_i
\]
are pairwise elementwise interalgebraic.  Fix $G_0$.  For each $i$, apply \cref{thm:compare} to the triples in $G_i$ and $G_0$.  This yields an $F$-definable quasi-isomorphism $\Phi_i\le G_i\times G_0$ and $F$-points $\lambda_i,\mu_i\in G_0(F)$ such that
\[
u_i\,\Phi_i\,(\lambda_i+u_0),\qquad
 a_i\,\Phi_i\,(a_0+\mu_i),\qquad
 u_i+a_i\,\Phi_i\,(\lambda_i+u_0+a_0+\mu_i).
\]
Let $K_i=K_{\Phi_i}\le G_0$ be the finite target-indeterminacy subgroup.  Put
\[
K=K_0+\cdots+K_{p-1}.
\]
The addition map $K_0\times\cdots\times K_{p-1}\to G_0$ has image $K$, so $K$ is finite; it is $F$-definable and therefore contained in $G_0(F)$.  Let
\[
G=G_0/K.
\]
ACF has elimination of imaginaries: definable equivalence classes can be represented inside definable sets.  Hence the quotient by a finite definable subgroup is definable; after replacing $G_0$ by its definably isomorphic algebraic group, this is the usual quotient by a finite algebraic subgroup.  Each $\Phi_i$ therefore induces a surjective homomorphism
\[
q_i:G_i\longrightarrow G
\]
with finite kernel.

Let $u,a\in G$ be the images of $u_0,a_0$.  They remain independent generics.  For $i>0$, write $\bar\lambda_i,\bar\mu_i$ for the images of $\lambda_i,\mu_i$ in $G$.  Then the explicit relations above give
\[
q_i(u_i)=u+\bar\lambda_i,\qquad q_i(a_i)=a+\bar\mu_i,
\]
and therefore $q_i(u_i+a_i)=u+a+\bar\lambda_i+\bar\mu_i$.  Put $d_i=q_i(b_i)$ (with zero translations for $i=0$).  Since $q_i$ has finite kernel, each element is interalgebraic with its image.  The literal cell equations then give
\[
q_i(a_i+b_i)=a+d_i+\bar\mu_i,\quad
q_i(u_i+b_i)=u+d_i+\bar\lambda_i,
\]
\[
q_i(u_i+a_i+b_i)=u+a+d_i+\bar\lambda_i+\bar\mu_i.
\]
Translation by an $F$-point does not change an algebraic-closure class.  Hence, for every $i$,
\begin{equation}\label{eq:anchored}
X\simF u,\quad A\simF a,\quad Y\simF u+a,
\end{equation}
\[
S_i\simF d_i,\quad S_{i+1}\simF a+d_i,
\quad R_i\simF u+d_i,\quad R_{i+1}\simF u+a+d_i.
\]
Thus all cells live, at the level of interalgebraicity classes, in one connected commutative group of dimension $t$.  The common anchor compares every cell directly with $G_0$, so no chain of quasi-isomorphisms is composed around the cycle.

\subsection{Two-translate rigidity and the cycle}

The comparison theorem has the following consequence.

\begin{lemma}[Two-translate rigidity]\label{lem:translate}
Let $G$ be a connected commutative $F$-definable group and $u$ generic over $F$.  If
\[
(u,v,u+v),\qquad (u,w,u+w)
\]
are elementwise interalgebraic group triples, then
\[
w=v+c\qquad\text{for some }c\in G(F).
\]
\end{lemma}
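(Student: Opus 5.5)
We apply \cref{thm:compare} to the two group triples, both sitting in the same group $G$, with the first triple $(x,y,z)=(u,v,u+v)$ and the second $(x',y',z')=(u,w,u+w)$. Since they are elementwise interalgebraic, the theorem yields $F$-points $\alpha,\beta\in G(F)$ and an $F$-definable quasi-isomorphism $\Phi\le G\times G$ with
\[
u\,\Phi\,(\alpha+u),\qquad v\,\Phi\,(w+\beta),\qquad u+v\,\Phi\,(\alpha+u+w+\beta).
\]
The plan is to show that $\Phi$ is, up to finite indeterminacy, the graph of the identity. Then the middle relation will force $w$ to be a translate of $v$ by an $F$-point.

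\textbf{Step 1: identify $\Phi$ along $u$.} Pass to the quotient form \eqref{eq:qiso-quotient}: $\Phi$ becomes a surjective $F$-definable homomorphism $f:G\to G/K_\Phi$ with finite kernel, where $K_\Phi\le G(F)$ is finite. Write $\pi:G\to G/K_\Phi$ for the quotient. The first relation gives $f(u)=\pi(u)+\bar\alpha$. Consider $\psi=f-\pi$. This is an $F$-definable homomorphism $G\to G/K_\Phi$, because $G$ is commutative. It takes the value $\bar\alpha\in (G/K_\Phi)(F)$ at the generic point $u$. Hence $\psi$ is constant on the generic type of $G$ over $F$, since the value at a generic lies in the $F$-definable set and all generics have the same type. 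A definable homomorphism that is constant on a generic type is constant on a definable set of full dimension. Because $G$ is connected, every element is a sum of two generics, so $\psi$ is constant on all of $G$. Evaluating at $0$ gives $\psi\equiv 0$. Therefore $\bar\alpha=0$ and $f=\pi$.

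\textbf{Step 2: read off $w$.} The second relation now says $\pi(v)=f(v)=\pi(w+\beta)$. Hence $w+\beta-v\in K_\Phi$, so $w=v+c$ with $c=k-\beta$ for some $k\in K_\Phi$. Since $K_\Phi$ is finite and $F$-definable, and $F$ is algebraically closed, $K_\Phi\subseteq G(F)$. So $c\in G(F)$, as required. The third relation is consistent with this and is not needed.

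\textbf{Main obstacle.} The delicate point is Step 1: the genericity argument that a homomorphism agreeing with $\pi$ up to an $F$-rational constant at one generic point agrees with it everywhere. The cleanest route is probably this. The $F$-definable set $\{g:\psi(g)=\bar\alpha\}$ contains the generic $u$, so it has full dimension. It is also a coset of the definable subgroup $\ker\psi$, so $\ker\psi$ has full dimension. Connectedness of $G$ then gives $\ker\psi=G$, and since $\psi(u)=\bar\alpha$ this yields $\bar\alpha=0$. One must also check that the translates $\alpha,\beta$ from \cref{thm:compare} remain $F$-rational after the quotient. This holds because $K_\Phi$ is $F$-definable and finite.
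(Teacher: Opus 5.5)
Your proof is correct and follows the paper's argument essentially verbatim: you apply \cref{thm:compare}, pass to the quotient homomorphism $f$, show that $\psi=f-\pi$ vanishes because its fiber through the generic $u$ is a full-dimensional coset of $\ker\psi$ in the connected group $G$, and then read off $w-v\in-\beta+K_\Phi\subseteq G(F)$ from the middle relation. The coset argument in your last paragraph is exactly the paper's; your ``sum of two generics'' phrasing in Step~1 is compressed but also valid, since it gives $\psi\equiv 2\bar\alpha$ on $G$ and evaluating at a generic yields $\bar\alpha=0$.
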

\begin{proof}
By \cref{thm:compare}, there is an $F$-definable quasi-isomorphism $\Phi\le G\times G$ and $F$-points $\alpha,\beta$ such that
\[
u\,\Phi\,(\alpha+u),\qquad v\,\Phi\,(w+\beta).
\]
Let
\[
K=\pi_2\bigl(\ker(\pi_1|_\Phi)\bigr)
\]
be the finite target-indeterminacy subgroup, let $\pi:G\to G/K$ be the quotient, and let $f:G\to G/K$ be the homomorphism associated with $\Phi$ by \eqref{eq:qiso-quotient}.  At the common generic $u$ we have
\[
f(u)-\pi(u)=\pi(\alpha).
\]
Hence the $F$-definable homomorphism $\delta=f-\pi$ has a fiber containing the generic $u$.  That fiber is a coset of $\ker\delta$, so $\ker\delta$ has full dimension and finite index.  Connectedness of $G$ forces $\ker\delta=G$, hence $f=\pi$.

Applying this equality at $v$ gives
\[
\pi(v)=\pi(w+\beta).
\]
Therefore $w-v\in -\beta+K$.  The finite $F$-definable subgroup $K$ consists of $F$-points because $F$ is algebraically closed.  Thus $w=v+c$ for some $c\in G(F)$.
\end{proof}

For the edge shared by $Q_i$ and $Q_{i+1}$, \eqref{eq:anchored} gives the two group triples
\[
(u,a+d_i,u+a+d_i),\qquad
(u,d_{i+1},u+d_{i+1}).
\]
They are elementwise interalgebraic because they represent the same three matroid points $(X,S_{i+1},R_{i+1})$; their pairwise dimensions are $2t$ by simplicity and \cref{lem:full}.  Therefore \cref{lem:translate} yields
\begin{equation}\label{eq:recurrence}
d_{i+1}=a+d_i+\varepsilon_i,\qquad \varepsilon_i\in G(F).
\end{equation}
At the wrap edge the next cell is literally $Q_0$, so put $d_p=d_0$.  Iterating \eqref{eq:recurrence} gives
\begin{equation}\label{eq:monodromy}
[p]a=-\sum_{i=0}^{p-1}\varepsilon_i\in G(F).
\end{equation}
This is the affine-monodromy identity.

For $p=2$, the two role-shifted cells are the same Fano plane; the extra Fano dependence is exactly the exceptional $(W,C,Y)$ triple allowed by the corrected seven-point theorem, so both recoveries remain valid.  For $p=3$, the wrap line $\{Y,S_2,R_0\}$ is exactly the required line in $Q_2$ and creates no forbidden extra collinearity in the other recovered cells.  Hence the same argument covers the small primes.

\subsection{Forcing the characteristic}

The element $a$ in \eqref{eq:monodromy} is generic over $F$.  Hence the $F$-defined fiber
\[
[p]^{-1}([p]a)
\]
contains a generic point.  It is a coset of $\ker[p]$, so $\ker[p]$ has full dimension and finite index.  Since $G$ is connected,
\begin{equation}\label{eq:pzero}
[p]G=0.
\end{equation}
Replace $G$ by its definably isomorphic algebraic group.  The differential of multiplication by $p$ at the identity is
\[
d[p]_e=p\,\mathrm{id}_{T_eG}.
\]
Equation \eqref{eq:pzero} makes this differential zero.  Since $G$ has positive dimension, its tangent space is nonzero; if the field characteristic were $q\ne p$, including characteristic zero, the scalar $p$ would be invertible and the differential could not vanish.  Therefore $\operatorname{char}K=p$.

Conversely, \eqref{eq:Mp-matrix} is a linear representation over $\mathbf F_p$.  This proves \cref{thm:intro-singleton}.

\section{A connected almost-entropic separation}\label{sec:separation}

The singleton characteristic theorem first yields a disconnected witness in the outer region of \cref{fig:venn}.  Its two summands will supply the spanning restrictions of the connected construction.

\subsection{The direct-sum consequence}

\begin{corollary}\label{cor:direct-separation}
If $p\ne q$ are primes, then
\[
M(p)\oplus M(q)\in\AlmostEnt\setminus\FAlg.
\]
\end{corollary}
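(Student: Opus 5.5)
The corollary has two parts, almost-entropicity and non-folded-algebraicity, and I will prove them separately.

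\emph{Almost entropic.}  Each $M(p)$ is linear over $\mathbf F_p$ by \eqref{eq:Mp-matrix}, so it is algebraic and hence, by Mat\'u\v{s}'s theorem~\cite{Matus2024} (or directly by \cref{thm:falg-ae}), almost entropic.  I will show that the almost-entropic class is closed under direct sums.  Choose entropic polymatroids $c_nh_n\to r_{M(p)}$ and $c'_nh'_n\to r_{M(q)}$, with $h_n,h'_n$ realized by random variables $(X_e)$ and $(X'_f)$, and $c_n,c'_n>0$.  The only point needing care is that the two sequences may use different scaling constants.  I handle this by first approximating the ratio $c'_n/c_n$ by rationals $k/l$.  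I then take $l$ i.i.d.\ copies of the first family and $k$ i.i.d.\ copies of the second, which multiplies the entropies by $l$ and $k$ respectively.  Finally, I make the two families mutually independent.  Entropy is additive across independent families, so the joint entropy function is $l h_n(A\cap E_1)+k h'_n(A\cap E_2)$.  After one common scaling, the error tends to zero.  Since the cone is closed, $r_{M(p)\oplus M(q)}$ lies in it.  Alternatively, and more simply, the direct sum is itself linear over a single field whenever both summands are, but here $p\neq q$, so that shortcut is unavailable and the product argument is the one to use.

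\emph{Not folded algebraic.}  By \cref{prop:directsum}, $\chi_{\FAlg}(M(p)\oplus M(q))=\chi_{\FAlg}(M(p))\cap\chi_{\FAlg}(M(q))$.  By \cref{thm:intro-singleton}, this intersection is $\{p\}\cap\{q\}=\varnothing$.  Hence the direct sum has no folded-algebraic representation over any field.  In fact only the easy direction of \cref{prop:directsum} is needed: a representation of the sum restricts to representations of both summands over the same field $K$, and these force $\operatorname{char}K=p$ and $\operatorname{char}K=q$ simultaneously, which is impossible.

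I expect the main obstacle to be the scaling mismatch in the closure-under-direct-sum step.  I would handle it either by the rational-approximation-and-repetition trick above, or by invoking the standard fact that the almost-entropic cone is a closed convex cone and that the independent product of two entropic vectors realizes their sum on disjoint ground sets.
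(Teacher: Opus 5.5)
Your proposal is correct and follows the paper's proof: linearity over $\mathbf F_p$ and $\mathbf F_q$ plus \cref{thm:falg-ae} and closure of $\AlmostEnt$ under independent products give membership, while \cref{thm:intro-singleton} with \cref{prop:directsum} rules out folded algebraicity. As you observe, only the trivial restriction direction of \cref{prop:directsum} is needed there. Your rational-approximation-and-repetition step for mismatched scaling constants simply makes explicit a detail that the paper's one-line independent-product argument leaves implicit.
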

\begin{proof}
Since $M(p)$ and $M(q)$ are linear over $\mathbf F_p$ and $\mathbf F_q$, respectively, they are folded algebraic and hence almost entropic by \cref{thm:falg-ae}.  The almost-entropic cone is closed under direct sums: approximate the two rank functions by entropic vectors, realize the two approximating random systems independently, and use additivity of entropy under independent products.  Thus $M(p)\oplus M(q)\in\AlmostEnt$.

On the other hand, \cref{thm:intro-singleton,prop:directsum} give
\[
\chi_{\FAlg}(M(p)\oplus M(q))=\{p\}\cap\{q\}=\varnothing,
\]
so the direct sum is not folded algebraic.
\end{proof}

Taking $(p,q)=(2,3)$ gives a $16$-element witness.  The next construction identifies three elements rather than keeping the two characteristic gadgets disjoint.

\subsection{Almost-entropic independent-set welding}

For independent points $x,y$, freely add a point $e$ on the rank-two flat $\cl(x,y)$, contract $e$, and delete $y$.  The surviving point $x$ then represents the identification of $x$ with $y$.  This is the principal-extension operation defined in \cref{subsec:m33}.  Mat\'u\v{s} proved that the almost-entropic class is closed under principal extensions and minors~\cite{Matus2018}; hence this operation preserves almost-entropicity.

\begin{proposition}[Independent-set welding]\label{prop:welding}
Let $M_1,M_2\in\AlmostEnt$ have disjoint ground sets, and choose ordered independent sets
\[
I_1=(x_1,\dots,x_k),\qquad I_2=(y_1,\dots,y_k).
\]
Starting from $M_1\oplus M_2$, identify $x_j$ with $y_j$ successively by the preceding principal-extension--contraction--deletion operation.  The resulting matroid $W$ is almost entropic and contains natural restrictions isomorphic to both $M_1$ and $M_2$.

If $I_1$ and $I_2$ are bases of two rank-$r$ matroids and $k=r$, then $r(W)=r$ and both factor restrictions are spanning.
\end{proposition}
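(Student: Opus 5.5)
Almost-entropicity is the easy part. The direct sum $M_1\oplus M_2$ is almost entropic: approximate each rank function by entropic vectors and take independent products, exactly as in the proof of \cref{cor:direct-separation}. Each identification step is a principal extension followed by a contraction and a deletion, and Mat\'u\v{s}'s closure theorem~\cite{Matus2018} says each of these preserves $\AlmostEnt$. Induction on $j$ then shows that $W$ is almost entropic.

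For the restrictions, I would first derive a closed rank formula by induction on the number of identifications. It should be the same formula as \eqref{eq:m33-rank}:
\[
r_W(S)=\min_{J\subseteq[k]}\bigl(r_1(S_1\cup I_1(J))+r_2(S_2\cup I_2(J))-|J|\bigr),
\]
where $S_1,S_2$ are the preimages of $S$ in the two factors, with $x_j$ standing for the identified element.

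The single-step computation goes as follows. Let $N$ be the current matroid with independent pair $x,y$, and let $e$ be freely added to $\cl(x,y)$. The principal-extension rank rule says that $r(Te)=r(T)$ when $\{x,y\}\subseteq\cl(T)$, and $r(Te)=r(T)+1$ otherwise. Hence, after contracting $e$ and deleting $y$,
\[
r'(T)=r_N(Te)-1=\min\bigl(r_N(T),\,r_N(Txy)-1\bigr).
\]
To check the minimum, note that if $x,y\in\cl(T)$ then $r_N(Txy)-1=r_N(T)-1<r_N(T)$, which would make the minimum wrong. So the honest formula is: $r'(T)=r_N(T)-1$ when $\{x,y\}\subseteq\cl_N(T)$, and $r'(T)=r_N(T)$ otherwise, evaluated on $T$ viewed in $N$ with $x$ retained. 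Here $x$ is present in $T$ whenever the identified element is.

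I would convert this into the min-formula above by grouping $T=T_1\cup T_2$. In the base case the current matroid is the direct sum, and $\{x_j,y_j\}\subseteq\cl(T)$ reduces to a statement about each factor. For the later steps the induction hypothesis supplies the previous minimum, and submodularity of the factors shows that adding index $j$ to $J$ realizes exactly the drop by one. \textbf{This inductive bookkeeping is the main obstacle}, because the later identifications interact with the earlier ones. If the clean min-formula resists, I would instead prove only the two restriction claims directly, by showing that each step preserves the ranks of sets lying on one side.

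Given the formula, the restriction claims follow as in the proof of \cref{prop:m33}. For $S$ in the $M_1$-side, independence of $I_2$ makes every term at least $r_1(S)$, and $J=\varnothing$ attains this value. For $T$ on the $M_2$-side, let $I$ index the identified elements in $T$. Independence of $I_1$ contributes $|I\cup J|$, and removing $y_j$ for $j\in I\setminus J$ lowers $r_2$ by at most $|I\setminus J|$. So every term is at least $r_2(T)$, with equality at $J=I$.

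Finally, suppose $I_1$ and $I_2$ are bases and $k=r$. Then $M_1$ is a restriction containing the spanning set $I_1$. Taking $J=[k]$ gives $r_W(E)\le r+r-r=r$, while the $M_1$ restriction gives $r_W(E)\ge r$. Hence $r(W)=r$ and both factor restrictions are spanning.
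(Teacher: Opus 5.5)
Your route is the paper's: Mat\'u\v{s}'s closure theorems give almost-entropicity, a closed min-formula for $r_W$ comes from induction over the identifications, and then come the restriction and spanning arguments, which match the paper's essentially verbatim. The gap is in the one step with real content, the closed formula, and it is self-inflicted.

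Your first derivation $r'(T)=\min\bigl(r_N(T),r_N(Txy)-1\bigr)$ is correct, and your objection to it is mistaken. When $\{x,y\}\subseteq\cl_N(T)$, the principal-extension rule gives $r_N(Te)=r_N(T)$, so $r'(T)=r_N(T)-1$, which is exactly what the minimum returns. Your ``honest'' case-split formula is the same function. Indeed, $\{x,y\}\subseteq\cl_N(T)$ holds if and only if $r_N(Txy)=r_N(T)$, and otherwise $r_N(Txy)\ge r_N(T)+1$. Having discarded the min form, you are left with an induction you call the main obstacle and only gesture at (``submodularity of the factors shows\dots''), plus a fallback you do not carry out. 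As written, the rank formula on which every later claim rests is not proved.

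If you keep the min form, the induction is a formal substitution and needs no submodularity or factor-wise analysis. Write $P_J=\{x_j,y_j:j\in J\}$ and assume $r^{(m)}(S)=\min_{J\subseteq[m]}\bigl(r_{M_1\oplus M_2}(S\cup P_J)-|J|\bigr)$. The next identification gives
\[
r^{(m+1)}(S)=\min\bigl\{r^{(m)}(S),\,r^{(m)}(S\cup\{x_{m+1},y_{m+1}\})-1\bigr\}.
\]
Substituting the hypothesis into both terms yields the minimum over $J\subseteq[m+1]$. This is the paper's observation that the one-step operators $r(S)\mapsto\min\{r(S),r(Sxy)-1\}$ commute for disjoint pairs. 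The ``interaction'' between later and earlier identifications that worried you is nothing more than this commutation.

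The one-step rule only needs the new point $e$ to be a nonloop. The stage-$m$ formula, together with independence of $I_1$ and $I_2$, gives $r^{(m)}(\{x_{m+1},y_{m+1}\})=2$, so each pair is still independent when it is welded. With this in place, your restriction and rank arguments go through unchanged.
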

\begin{proof}
Almost-entropicity follows from closure under direct sums, principal extensions, contractions, and deletions.  It remains to check that neither factor is damaged by the identifications.

Keep the original disjoint labels while performing the construction and, after the last deletion, retain $x_j$ as the common label.  For $S$ in the final ground set an induction on the $k$ identifications gives
\begin{equation}\label{eq:weld-rank}
 r_W(S)=
 \min_{J\subseteq[k]}
 \left(
 r_{M_1\oplus M_2}
 \bigl(S\cup\{x_j,y_j:j\in J\}\bigr)-|J|
 \right).
\end{equation}
Indeed, one identification transforms a rank function by
\[
 r(S)\longmapsto \min\{r(S),r(Sxy)-1\},
\]
and the resulting operators commute for the disjoint pairs.

If $S\subseteq E(M_1)$, then the $M_2$-contribution of the adjoined $y_j$ is $|J|$ because $I_2$ is independent.  Every term of \eqref{eq:weld-rank} is therefore at least $r_{M_1}(S)$, while $J=\varnothing$ gives equality.  Thus $M_1$ survives as a restriction.

Now let $T\subseteq E(M_2)$ and let $S$ be its image after replacing every selected $y_j$ by the common label $x_j$.  Put $I=\{j:y_j\in T\}$.  Taking $J=I$ in \eqref{eq:weld-rank} gives $r_{M_2}(T)$.  For arbitrary $J$, independence of $I_1$ contributes $|I\cup J|$, while deleting the elements $y_j$ with $j\in I\setminus J$ can lower $r_{M_2}(T)$ by at most $|I\setminus J|$.  Hence the corresponding term is at least
\[
 |I\cup J|+r_{M_2}(T)-|I\setminus J|-|J|
 =r_{M_2}(T).
\]
So $M_2$ also survives exactly.

If both chosen sets are full bases of rank $r$, the surviving $M_1$ restriction gives $r(W)\ge r$, while taking $J=[r]$ in \eqref{eq:weld-rank} for the whole ground set gives $r(W)\le r+r-r=r$.
\end{proof}

\subsection{The 13-element witness}\label{subsec:c23}

For $p=2,3$, write $M_p=M(p)$ and distinguish the nonshared labels by a subscript $p$.  In both standard representations from \eqref{eq:Mp-matrix},
\[
B_p=\{A_p,X_p,S_{0,p}\}
\]
is a basis.  Apply \cref{prop:welding} to identify
\[
(A_2,X_2,S_{0,2})\sim(A_3,X_3,S_{0,3})
\]
pointwise, and denote the three common elements by $A,X,S_0$.  Let the resulting matroid be $C_{2,3}$.  Its ground set is
\[
\begin{aligned}
E(C_{2,3})=\{&A,X,S_0,
Y_2,R_{0,2},S_{1,2},R_{1,2},\\
&Y_3,R_{0,3},S_{1,3},R_{1,3},S_{2,3},R_{2,3}\}.
\end{aligned}
\]
Thus
\begin{equation}\label{eq:C23-size}
|E(C_{2,3})|=7+9-3=13,
\qquad r(C_{2,3})=3.
\end{equation}
By construction there are spanning restrictions
\begin{equation}\label{eq:C23-restrictions}
C_{2,3}|E_2\cong M(2),
\qquad
C_{2,3}|E_3\cong M(3),
\qquad
E_2\cap E_3=\{A,X,S_0\}.
\end{equation}

\begin{theorem}\label{thm:C23}
The matroid $C_{2,3}$ is simple, rank $3$, $3$-connected, and
\[
C_{2,3}\in\AlmostEnt\setminus\FAlg.
\]
Consequently \cref{thm:intro-strict} holds.
\end{theorem}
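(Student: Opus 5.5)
Almost-entropicity and the rank and size statements come straight from \cref{prop:welding}: $M(2)$ and $M(3)$ are linear over $\mathbf F_2$ and $\mathbf F_3$, hence in $\AlmostEnt$ by \cref{thm:falg-ae}. Since $B_p=\{A_p,X_p,S_{0,p}\}$ is a basis of each rank-three factor, the welding $C_{2,3}$ is almost entropic of rank $3$, with the spanning restrictions \eqref{eq:C23-restrictions}. The size count \eqref{eq:C23-size} is immediate.

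For non-membership in $\FAlg$, I use the fact that a $t$-fold algebraic representation restricts to one of the same degree over the same field. So if $C_{2,3}$ were folded algebraic over $K$, the restriction $C_{2,3}|E_2\cong M(2)$ would force $\operatorname{char}K=2$, and $C_{2,3}|E_3\cong M(3)$ would force $\operatorname{char}K=3$, both by \cref{thm:intro-singleton}. That is a contradiction.

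Simplicity needs a check on pairs. A pair inside $E_2$ or inside $E_3$ is independent by the restrictions and simplicity of $M(p)$ (\cref{prop:lines}). For $e\in E_2\setminus E_3$ and $f\in E_3\setminus E_2$, I use \eqref{eq:weld-rank}: $r_W(\{e,f\})$ is the minimum over $J\subseteq[3]$ of $r_{M_2}(\{e\}\cup X_J)+r_{M_3}(\{f\}\cup Y_J)-|J|$. The terms are $2$ for $J=\varnothing$. For $|J|=1$ they are at least $1+1-1$. For larger $J$ each side is at least $|J|$, and in fact each side is at least $|J|+1$ unless the element lies on the line spanned by the chosen pair or is spanned by the full basis. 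I will bound the terms as $\ge (\min(|J|+1,3))\cdot 2-|J|\ge 2$ unless $e$ lies in $\cl$ of the selected subset, and handle that case directly: with $|J|=2$, $e$ on the line spanned by the two chosen basis points and $f$ on the corresponding line gives $2+2-2=2$; with $|J|=3$ the term is $3+3-3=3$. Hence every term is at least $2$, and the pair is independent. This short case analysis (or the companion rank oracle) settles simplicity.

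For $3$-connectivity I first get connectedness: every element lies in $E_2$ or $E_3$, each restriction is connected, and the two restrictions share $A$. Now let $(U,V)$ be a $2$-separation, so $r(U)+r(V)=4$ with $|U|,|V|\ge2$. By simplicity a set of size $\ge2$ has rank $\ge2$, so $r(U)=r(V)=2$. Thus $U$ and $V$ are both contained in lines (rank-two flats) and cover all $13$ points. The Fano restriction $C_{2,3}|E_2$ is spanning, and its seven points would be covered by two lines of $F_7$. But each line of $F_7$ meets $E_2$ in at most three points, since the restriction of a line to $E_2$ is a flat of $M(2)$. Two such lines cover at most $6<7$ points, a contradiction. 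Hence there is no $2$-separation.

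The main obstacle is the cross-pair simplicity check, since it depends on the welded rank formula rather than on a factor restriction. I would either finish the short case analysis above or cite the companion oracle's verification of all $\binom{13}{2}$ pairs.
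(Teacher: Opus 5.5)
Your proposal is correct and follows the paper's proof essentially step for step: welding gives almost-entropicity and rank, the two spanning Gordon restrictions with \cref{thm:intro-singleton} rule out folded algebraicity, the welded rank formula handles cross-factor pairs, and the two-line Fano covering argument gives $3$-connectivity. One slip: for $|J|=1$ the bound should be $2+2-1=3$ by simplicity, not $1+1-1$; and for $|J|\ge2$ no case analysis is needed, since independence of the welding sets already makes each factor term at least $|J|$, so every term is at least $|J|\ge2$.
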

\begin{proof}
We first check simplicity.  A pair contained in one factor has rank two because that factor survives as a simple restriction.  Consider a cross-factor pair.  In \eqref{eq:weld-rank}, the term for $J=\varnothing$ is two.  For $|J|=1$, simplicity of both factors makes the two factor contributions equal to two, so the term is three; for $|J|\ge2$, independence of the welding sets makes each factor contribution at least $|J|$, so the term is at least $|J|\ge2$.  Thus every cross-factor pair also has rank two.  By \cref{prop:welding}, $C_{2,3}$ is almost entropic and has rank $3$.

If $C_{2,3}$ were folded algebraic over a field $K$, restriction to the first spanning copy in \eqref{eq:C23-restrictions} would give a folded-algebraic representation of $M(2)$ over $K$, and restriction to the second would give one of $M(3)$ over the same field.  By \cref{thm:intro-singleton} this would force
\[
\operatorname{char}K=2
\qquad\text{and}\qquad
\operatorname{char}K=3,
\]
a contradiction.

It remains to prove $3$-connectivity.  The restriction $M(2)=F_7$ is spanning and connected, so $C_{2,3}$ is connected.  In a simple rank-three matroid, a $2$-separation $(U,V)$ with $|U|,|V|\ge2$ forces
\[
r(U)=r(V)=2.
\]
Intersecting with the spanning Fano restriction would then cover all seven Fano points by two rank-two subsets.  Each such subset is contained in a Fano line, and two Fano lines contain at most six points.  This is impossible.  Hence $C_{2,3}$ has no $2$-separation and is $3$-connected.
\end{proof}

For reference, its nontrivial rank-two flats consist of one five-point line, three four-point lines, and nine three-point lines.  The companion certificate reconstructs this distribution directly from the rank oracle.

The common points of the connected witness allow both characteristic obstructions to remain visible from one dealer.

\subsection{A single-dealer consequence}\label{subsec:dealer}

For a matroid $M$ and a nonloop $d\in E(M)$, write
\[
\Gamma_d(M)=\{Q\subseteq E(M)-d:d\in\cl_M(Q)\}
\]
for its dealer port.  We use the following exact pointed version of the standard Brickell--Davenport rigidity theorem~\cite{BrickellDavenport1991}.

A participant $x\ne d$ is \emph{essential} if there is a set $Q\subseteq E(M)-\{d,x\}$ such that $d\notin\cl_M(Q)$ but $d\in\cl_M(Qx)$.

Call $h:2^E\to\mathbb Z_{\ge0}$ an \emph{algebraic polymatroid over $K$} if there are finite tuples $X_e$ in a field extension of $K$ such that
\[
h(A)=\trdeg_K K(X_e:e\in A)\qquad(A\subseteq E).
\]
Write $h(d\mid Q)=h(Qd)-h(Q)$.

\begin{proposition}[Exact port rigidity]\label{prop:port-rigidity}
Let $M$ be connected and let $d\in E(M)$.  Suppose $\Gamma_d(M)$ has an exact $t$-fold algebraic pointed realization over a field $K$: that is, an algebraic polymatroid $h$ on $E(M)$ satisfying
\[
h(d)=t,\qquad h(x)\le t\quad(x\ne d),
\]
and
\[
h(d\mid Q)=
\begin{cases}
0,&Q\in\Gamma_d(M),\\
t,&Q\notin\Gamma_d(M).
\end{cases}
\]
Then $M$ is $t$-fold algebraic over $K$.
\end{proposition}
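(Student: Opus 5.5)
The plan is to show that the algebraic polymatroid $h$ is exactly $t\,r_M$ on all of $E(M)$. The tuples $X_e$ witnessing $h$ are then a $t$-fold algebraic representation of $M$ in the sense of \cref{def:falg}, since \eqref{eq:falg} reads $h(A)=t\,r_M(A)$. This is the polymatroid form of the Brickell--Davenport theorem. I would carry it out in three steps: essentiality gives the singleton values, a normalized version of $h$ is shown to be a matroid, and that matroid is identified with $M$ through its port at $d$.

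\emph{Step 1: every participant is essential and has full size.} Since $M$ is connected, every $x\ne d$ lies in a circuit $C$ with $d$. Put $Q=C-\{d,x\}$. Then $d\notin\cl_M(Q)$ but $d\in\cl_M(Qx)$, so $x$ is essential. For such $Q$ the hypotheses give $h(d\mid Q)=t$ and $h(d\mid Qx)=0$. Submodularity then yields
\[
h(x)\ \ge\ h(Qx)-h(Q)\ =\ h(Qxd)-h(Qd)+t\ \ge\ t,
\]
and together with $h(x)\le t$ this gives $h(x)=t$ for every element, the dealer included.

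\emph{Step 2: all increments are $0$ or $t$.} I would prove that $h(Ax)-h(A)\in\{0,t\}$ for all $A$ and $x$. By induction from $h(\varnothing)=0$, this shows that $g=h/t$ is integer valued, monotone, submodular and bounded by cardinality, so $g$ is a matroid rank function. This is the step I expect to be the main obstacle, since it is the heart of Brickell--Davenport.

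The route I would try first is the classical one. For fixed $A$ and $x$, use connectivity to find a circuit through $d$ and $x$. Exploit the dichotomy $h(d\mid Q)\in\{0,t\}$ on sets $Q$ built from $A$ together with parts of that circuit. The aim is to express the increment $h(Ax)-h(A)$ as a difference of two dealer conditionals, with submodularity giving the matching inequalities. When $d\in A$ or $x=d$, the statement follows directly from the port values, using $h(d)=t$ and Step 1. If the direct argument becomes awkward, I would fall back on the known result of Martí-Farré and Padró that ideal polymatroids of connected ports are matroidal, rescaled by $t$.

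\emph{Step 3: identify the matroid.} By construction, the matroid $N$ with rank function $g$ satisfies $d\in\cl_N(Q)$ if and only if $Q\in\Gamma_d(M)$. Hence $N$ and $M$ have the same port at $d$, and $N$ is connected because all participants are essential. Lehman's theorem says that a connected matroid is determined by its port at any element. Therefore $N=M$, so $h=t\,r_M$, and the tuples $X_e$ form a $t$-fold algebraic representation of $M$ over $K$.
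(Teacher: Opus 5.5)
Your proposal is correct and follows the paper's proof essentially step for step. Both use the same essentiality computation (expanding $h(Qxd)$ for $Q=C-\{d,x\}$) to force $h(x)=t$, then the rescaled polymatroidal Brickell--Davenport theorem, which the paper also simply cites, to make $h/t$ a matroid rank function, and then Lehman's port-uniqueness theorem to identify that matroid with $M$.

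The ``direct route'' you sketch for Step~2 would not go through as stated, so the citation is what actually carries that step, exactly as in the paper. For instance, the case $d\in A$ is not immediate from the port values: for $A=A'd$ one has $h(Ax)-h(A)=h(x\mid A')+h(d\mid A'x)-h(d\mid A')$, which still contains a participant increment. In general, $h(x\mid A)=h(d\mid A)+h(x\mid Ad)-h(d\mid Ax)$ leaves the non-dealer term $h(x\mid Ad)$.
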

\begin{proof}
Because $M$ is connected, every participant is essential in its port.  Fix $x\ne d$ and choose $Q$ with $Q\notin\Gamma_d(M)$ but $Qx\in\Gamma_d(M)$.  Expanding $h(Qxd)$ in the two orders gives
\[
h(d\mid Q)+h(x\mid Qd)=h(x\mid Q)+h(d\mid Qx).
\]
The pointed conditions therefore imply
\[
t+h(x\mid Qd)=h(x\mid Q)\le h(x)\le t,
\]
so $h(x)=t$ for every participant.

Normalize $f=h/t$.  Then $f$ is a polymatroid with singleton rank one at every element and with a perfect connected dealer port.  The exact polymatroidal Brickell--Davenport statement says that a normalized polymatroid with unit singleton ranks and a perfect connected port is a matroid rank function~\cite[Theorem~4.1.2]{PadroSecretSharingNotes}; it is the combinatorial form of~\cite[Theorem~1]{BrickellDavenport1991}.  Hence $f$ is the rank function of a connected matroid $N$ with dealer port $\Gamma_d(M)$.  Lehman's port-uniqueness theorem says that a connected matroid is determined by the circuits containing one fixed element, equivalently by any one of its connected ports~\cite{Lehman1964}; see also the constructive oracle formulation in~\cite[p.~190]{CoullardHellerstein1996}.  Thus $N=M$.  Therefore $h=t r_M$, and the algebraic realization of $h$ is a $t$-fold algebraic representation of $M$.
\end{proof}

Take the common element $A$ as dealer in $C_{2,3}$ and put
\[
\Gamma=\Gamma_A(C_{2,3}).
\]
There are $12$ participants, all essential because $C_{2,3}$ is connected.  In fact the participant matroid $C_{2,3}\setminus A$ is itself $3$-connected.  It is simple of rank $3$ and contains the spanning restriction $M(2)\setminus A\cong M(K_4)$.  A $2$-separation would cover the six elements of this restriction by two sets of rank at most two.  Each such set has at most three elements, and a three-element rank-two set in $M(K_4)$ is a triangle.  Both sets would therefore have to be triangles, but any two triangles of $K_4$ share an edge and cover at most five edges, a contradiction.

Let
\[
P_2=E_2-\{A\},\qquad P_3=E_3-\{A\}.
\]
Since the two Gordon matroids are literal restrictions of $C_{2,3}$, deleting all participants outside $P_2$ or $P_3$ gives
\begin{equation}\label{eq:port-minors}
\Gamma|_{P_2}=\Gamma_A(M(2)),
\qquad
\Gamma|_{P_3}=\Gamma_A(M(3)).
\end{equation}

\begin{corollary}[One dealer retains both characteristics]\label{cor:dealer}
The $12$-participant port $\Gamma_A(C_{2,3})$ admits no exact pointed folded-algebraic realization over any field, at any folding degree.
\end{corollary}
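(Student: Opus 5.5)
The plan is to argue by contradiction and reduce to \cref{thm:C23}. Suppose that for some field $K$ and some $t\ge1$ there is an algebraic polymatroid $h$ on $E(C_{2,3})$ over $K$ that gives an exact $t$-fold pointed realization of $\Gamma=\Gamma_A(C_{2,3})$. That is, $h(A)=t$, $h(x)\le t$ for every participant, and $h(A\mid Q)\in\{0,t\}$ according to whether $Q\in\Gamma$. The first step is to check the hypotheses of \cref{prop:port-rigidity} for $M=C_{2,3}$ and $d=A$. The only structural requirement is connectedness of $C_{2,3}$, which \cref{thm:C23} already provides through its spanning Fano restriction. The pointed conditions are exactly the assumed realization.

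The second step applies \cref{prop:port-rigidity}. It gives a $t$-fold algebraic representation of $C_{2,3}$ over $K$, that is, $h=t\,r_{C_{2,3}}$. This contradicts $C_{2,3}\notin\FAlg$ from \cref{thm:C23}. Unpacked, restricting to $E_2$ and $E_3$ via \eqref{eq:C23-restrictions} yields $t$-fold algebraic representations of $M(2)$ and $M(3)$ over the same $K$. \Cref{thm:intro-singleton} then forces $\operatorname{char}K=2$ and $\operatorname{char}K=3$ at once, which is impossible. Since $K$ and $t$ were arbitrary, no exact pointed folded-algebraic realization exists.

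I also want to explain why the deletion subports \eqref{eq:port-minors} do not by themselves suffice, and why the proof must go through the whole port. Restricting $h$ to $P_2\cup\{A\}$ does give an exact pointed realization of $\Gamma_A(M(2))$, since $M(2)$ is connected. \Cref{prop:port-rigidity} would then apply to each Gordon factor separately, forcing characteristic $2$ from one and characteristic $3$ from the other, again over the single field $K$. This is an alternative route that avoids invoking \cref{thm:C23}. I would record it as the interpretation that one dealer retains both characteristic obstructions.

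The only delicate point is the content of \cref{prop:port-rigidity}: the step showing $h(x)=t$ for every participant, followed by the appeal to Brickell--Davenport and Lehman uniqueness. Since that proposition is already established, the main obstacle here is only verifying that essentiality of every participant follows from connectedness. \Cref{prop:port-rigidity} assumes exactly that, and the text before the corollary notes it holds for all twelve participants. So I expect no real difficulty beyond bookkeeping.
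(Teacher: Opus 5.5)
Your proof is correct, but your main route differs from the paper's. The paper never applies \cref{prop:port-rigidity} to $C_{2,3}$ itself. It restricts the single pointed polymatroid $h$ to $\{A\}\cup P_2$ and to $\{A\}\cup P_3$, and uses \eqref{eq:port-minors} to see that these are exact pointed $t$-fold realizations of $\Gamma_A(M(2))$ and $\Gamma_A(M(3))$. It then applies \cref{prop:port-rigidity} to the connected matroids $M(2)$ and $M(3)$ and concludes with \cref{thm:intro-singleton}. That is exactly what you record as your ``alternative route''.

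Your primary route instead applies \cref{prop:port-rigidity} once, to the connected matroid $C_{2,3}$, to get $h=t\,r_{C_{2,3}}$, and then cites $C_{2,3}\notin\FAlg$ from \cref{thm:C23}. This gives a sharper intermediate fact: any exact pointed realization of the whole port is automatically a $t$-fold representation of all of $C_{2,3}$, so the corollary becomes a repackaging of \cref{thm:C23}. It relies on the connectivity of $C_{2,3}$, which \cref{thm:C23} supplies. The paper's route needs less: only that $M(2)$ and $M(3)$ are connected literal restrictions through the dealer $A$. It would therefore work for a pointed realization of any port containing both Gordon ports as deletion subports with the same dealer, whether or not the ambient port is connected. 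It is also what the corollary's title expresses: each deletion subport separately carries its characteristic constraint.

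One correction to your framing. The claim that the deletion subports ``do not by themselves suffice'' and that ``the proof must go through the whole port'' is wrong as stated. The subport argument is complete, and it is the paper's proof. All it uses is that both subport realizations are restrictions of one $h$ over one field $K$, which is automatic. Subports realized separately over different fields would give no contradiction; if that is what you meant, say so explicitly.
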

\begin{proof}
Assume that such a realization exists over a field $K$ with folding degree $t$.  Restricting the pointed algebraic polymatroid to the dealer together with $P_2$ and $P_3$ gives exact pointed $t$-fold realizations of the two ports in \eqref{eq:port-minors}.  By \cref{prop:port-rigidity}, both $M(2)$ and $M(3)$ would then be $t$-fold algebraic over $K$.  The singleton characteristic theorem forces simultaneously $\operatorname{char}K=2$ and $\operatorname{char}K=3$, a contradiction.
\end{proof}

Thus one connected dealer port retains both incompatible characteristic requirements, and every participant is essential.

\section{Discussion}\label{sec:discussion}

Folded algebraicity gives a common finite-packet enlargement of algebraic and folded-linear representation, and every folded-algebraic matroid is almost entropic.  The Gordon matroids show, however, that finite packetization does not remove characteristic information.  Their local seven-point cells have algebraic realizations in every characteristic, but the cyclic arrangement of those cells forces a torsion identity in a positive-dimensional algebraic group.  This is what determines the characteristic.

Independent-set welding converts the resulting direct-sum separation into a connected one.  The $13$-element matroid $C_{2,3}$ has rank $3$ and is $3$-connected, while its two spanning restrictions impose incompatible folded-algebraic characteristics.  A shared element may also be chosen as dealer, giving one connected port in which all participants are essential and the same incompatibility remains visible.

For the inner classes, \cref{prop:m33} gives the explicit bounds
\[
9\le n_{\min}(\FLin\setminus\Alg)\le24,
\qquad
8\le n_{\min}(\FAlg\setminus(\Alg\cup\FLin))\le33.
\]
These are the certified intervals for the two regions.

Two questions stand out.  First, how small can a matroid in $\AlmostEnt\setminus\FAlg$ be?  The construction gives the certified interval $8\le n_{\min}\le13$.  Second, which characteristic sets can occur for folded-algebraic matroids, and how does the answer interact with connectivity and folding degree?  The family $M(p)$ supplies singleton spectra and independent-set welding combines incompatible singleton constraints; a broader structural description of possible spectra remains open.

\section*{Code and data availability}

The exact computational certificates supporting \cref{prop:mfn10,prop:m33,thm:C23,cor:dealer} are archived at Zenodo under DOI~\href{https://doi.org/10.5281/zenodo.22178786}{10.5281/zenodo.22178786}~\cite{KhazaeiCompanion2026}.  The deposit contains the coordinate functions and circuit data for $M^{\mathrm{FN}}_{10}$, the width-two $\mathbb F_5$ packet matrices for the fifteen-element core and for $A_{24}^{(5)}$, the exponent model of $NnF(5)$, three verifier scripts and their canonical JSON outputs, rank oracles for $M_{33}$ and $C_{2,3}$, and a SHA-256 manifest of every distributed file.  All arithmetic is exact: integer, finite-field, rational, or symbolic polynomial.  Running \texttt{python run\_all.py} recomputes all three certificates and compares them with the distributed files.  The archive certifies finite identities and rank calculations.  The characteristic obstructions \eqref{eq:c15-nonalg} and \eqref{eq:nnf5-noflin}, \cref{lem:principal-fixed}, and the representability conclusions of \cref{prop:m33,thm:C23,cor:dealer} are mathematical arguments rather than program outputs.

\section*{Declaration of generative AI and AI-assisted technologies in the manuscript preparation process}

During the preparation of this work, the author used ChatGPT (OpenAI) for mathematical exploration; assistance in checking arguments and developing and testing computational certificates; and drafting, restructuring, and language editing.  The author reviewed, verified, and revised all resulting material and takes full responsibility for the content of the publication.

\bibliographystyle{alpha}
\begingroup
\small
\bibliography{references}
\endgroup

\end{document}